\newtheorem{thm}{Theorem}[section]
\newtheorem{lm}[thm]{Lemma}
\newtheorem{conj}{Conjecture}[section]
\newtheorem{cla}{Claim}
\begin{document}
	
\title{A stability result on matchings in 3-uniform hypergraphs}
\author{Mingyang Guo, {Hongliang Lu \footnote{Corresponding email: luhongliang215@sina.com (H. Lu)}, Dingjia Mao }\\
		\small School of Mathematics and Statistics\\
		Xi'an Jiaotong University \\ \small  Xi'an, Shaanxi 710049, P.R.China \\ }

\date{}

\maketitle

\begin{abstract}

Let $n,s,k$ be three positive integers such that $1\leq s\leq(n-k+1)/k$ and let $[n]=\{1,\ldots,n\}$.  Let $H$ be a   $k$-graph with vertex set $[n]$,  and let $e(H)$ denote the number of edges of $H$. Let $\nu(H)$ and $\tau(H)$ denote the size of a largest matching and the size of a minimum vertex cover in $H$, respectively.  Define $A^k_i(n,s):=\{e\in\binom{[n]}{k}:|e\cap[(s+1)i-1]|\geq i\}$ for $2\leq i\leq k$ and $HM^k_{n,s}:=\big\{e\in\binom{[n]}{k}:e\cap[s-1]\neq\emptyset\big\}
\cup\big\{S\big\}\cup \big\{e\in\binom{[n]}{k}: s\in e, e\cap S\neq \emptyset\}$, where $S=\{s+1,\ldots,s+k\}$. Frankl and Kupavskii proposed a conjecture that if $\nu(H)\leq s$ and $\tau(H)>s$, then $e(H)\leq \max\{|A^k_2(n,s)|,\ldots ,|A^k_k(n,s)|,|HM^k_{n,s}|\}$. In this paper, we prove this conjecture for $k=3$ and sufficiently large $n$.
\end{abstract}

\section{Introduction}

A \emph{hypergraph} $H$ is a pair $H=(V,E)$, where $V:=V(H)$ is a set of vertices and $E:=E(H)$ is a set of non-empty subsets of $V$. For a positive integer $k\geq 2$, a hypergraph is \emph{$k$-uniform} if $E\subseteq\binom{V}{k}$, where $\binom{V}{k}:=\{T\subseteq{V}: |T|=k\}$. A $k$-uniform hypergraph is also called a \emph{$k$-graph}. We use \emph{$l$-set} to denote a set of $l$ elements and for any integer $n$ define $[n]:=\{1,2,\cdots,n\}$. Throughout this paper, we often identify $E(H)$ with $H$ when there is no confusion.

Given a $k$-graph $H$ we write $e(H):=|E(H)|$. For a vertex $v\in V(H)$, let $N_H(v):=\{f\in\binom{V}{k-1}:f\cup \{v\}\in E(H)\}$. For any $T\subseteq V(H)$, we use $d_H(T)$ to denote the {\it degree}
of $T$ in $H$, i.e., the number of edges of $H$ containing $T$. We denote $d_H(\{v\})$ by $d_H(v)$.


A \emph{matching} in a hypergraph $H$ is a set of pairwise disjoint edges. The number of edges in a matching $M$ denoted by $|M|$ is called the \emph{size} of the matching. The size of the largest matching in $H$ is denoted by $\nu(H)$, known as the\emph{ matching number} of $H$. A matching is \emph{perfect} if it covers all vertices of $V(H)$. A \emph{vertex cover} in a hypergraph $H$ is a set of vertices which intersects all edges of $H$. We use $\tau(H)$ to denote the minimum size of a vertex cover in $H$. A set $I \subseteq V(H)$ that contains no edge of $H$ is called an
\emph{independent set}  in $H$. The size of a largest independent set in $H$ is denoted by $\alpha(H)$, known as the\emph{ independent number} of $H$.

Let $n,s,k$ be three positive integers such that $k\geq2$ and $n\geq ks+k-1$. For any $U\subseteq [n]$ with $|U|=k(s+1)-1$, define $D^k_{n,s}(U):=\{e\in \binom{[n]}{k}:e\subseteq U\}$. Let $U,W$ be a partition of $[n]$ such that $|W|=s$. Define $H^k_{n,s}(U,W):=\{e\in\binom{[n]}{k}: e\cap W\neq\emptyset\}$.
When there is no confusion, we denote  $H^k_{n,s}(U,W)$ and $D^k_{n,s}(U)$ by $H^k_{n,s}$ and $D^k_{n,s}$, respectively.
In 1965, Erd\H os \cite{E65} asked for the determination of the maximum possible number of edges that can appear in any $k$-graph $H$ with $\nu(H)\leq s$. He conjectured that $H^k_{n,s}$ and $D^k_{n,s}$ are the two extremal constructions of this problem.
\begin{conj}[Erd\H os Matching Conjecture \cite{E65}]\label{conj}
	Let $n,s,k$ be three positive integers such that $k\geq2$ and $n\geq k(s+1)-1$. If $H$ is a $k$-graph on $n$ vertices and $\nu(H)\leq s$, then
	\begin{equation*}
		e(H)\leq\max\left\{\binom{n}{k}-\binom{n-s}{k},\binom{k(s+1)-1}{k}\right\}.
	\end{equation*}
\end{conj}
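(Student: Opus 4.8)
Conjecture \ref{conj} is one of the central open problems in extremal set theory; it is known for $k=2$ (the Erd\H os--Gallai theorem), for $k=3$ (Frankl, for all admissible $n$), and for $n$ large relative to $s$ and $k$ (Erd\H os, with Frankl's sharp threshold $n\ge(2s+1)k-s$ for the first bound), but remains open for $k\ge4$ in the intermediate range of $n$. The plan I would follow is the one behind these results. The first step is the \emph{shifting} (left-compression) reduction: for $i<j$ the compression $S_{ij}$, which replaces each edge $e$ with $j\in e$, $i\notin e$ and $(e\setminus\{j\})\cup\{i\}\notin H$ by the edge $(e\setminus\{j\})\cup\{i\}$, leaves $e(H)$ unchanged and does not increase $\nu(H)$; iterating the $S_{ij}$ to a fixed point we may assume $H$ is \emph{shifted}, i.e.\ $\{b_1<\cdots<b_k\}\in H$ whenever $\{a_1<\cdots<a_k\}\in H$ and $b_i\le a_i$ for all $i$. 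For a shifted $H$ the link $N_H(1)$ is again shifted, $d_H(1)=\max_{v}d_H(v)$, and $\nu(H)-1\le\nu(H-1)\le\nu(H)$, which opens an induction either on $s$ (peeling the vertex $1$, whose link is a shifted $(k-1)$-graph with matching number at most $s-1$ in the tight cases) or on $n$.

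In the large-$n$ range, where $\binom{n}{k}-\binom{n-s}{k}$ is the larger of the two bounds, I would prove the $H^k_{n,s}$ bound by a degree-counting argument I expect to be routine. Fix a maximum matching $M$, $|M|=\nu(H)\le s$, with vertex set $T$, $|T|=k|M|$; every edge of $H$ meets $T$. The edges $f$ with $|f\cap T|\ge2$ number at most $\binom{|T|}{2}\binom{n}{k-2}$, negligible against $\binom{n}{k}-\binom{n-s}{k}=\Theta(sn^{k-1})$. For each $v\in T$ the edges $f$ with $f\cap T=\{v\}$ form a $(k-1)$-graph $L_v$ on $[n]\setminus T$; maximality of $M$ together with $\nu(H)\le s$ constrains the $L_v$ (no large sub-collection of them admits pairwise disjoint $(k-1)$-edges that would extend $M$), so that invoking the $(k-1)$-uniform case, or a direct Erd\H os--Ko--Rado/kernel estimate, gives $e(H)\le|H^k_{n,s}|$ with equality only for $H=H^k_{n,s}$. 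Carrying the equality case through the bookkeeping is where most of the work sits.

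In the complementary small-$n$ range the target bound $\binom{k(s+1)-1}{k}$ is attained by $D^k_{n,s}$, and there I would use the Kruskal--Katona theorem. If $H$ is shifted, $\nu(H)\le s$ and $e(H)>\binom{k(s+1)-1}{k}$, then $e(H)>\binom{m}{k}$ with $m=k(s+1)-1$, so every shadow $\partial^{(j)}H$ (the family of $(k-j)$-subsets of edges of $H$) exceeds $\binom{m}{k-j}$; combining this with the structure of shifted families --- a shifted family this dense cannot be supported on $[m]$, and the way it reaches outside $[m]$ is constrained --- one extracts $s+1$ pairwise disjoint edges, a contradiction. For $k=3$ this argument, the large-$n$ argument above, and a stability step for the transitional window where the two bounds are of comparable size together give the full resolution; I would follow that route, using the link $N_H(1)$, a shifted $2$-graph for which $\nu$ is controlled by Erd\H os--Gallai, as the inductive handle and reducing the stability step to defect versions of the $k=2$ and $k=3$ statements.

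\textbf{Main obstacle.} The genuine difficulty is the regime $k\ge4$ with $n$ roughly in $[(s+1)k,(2s+1)k]$: there $\binom{n}{k}-\binom{n-s}{k}$ and $\binom{k(s+1)-1}{k}$ are of comparable order over a wide window, shifting alone does not decide which construction is extremal, and peeling the vertex $1$ loses too much. No currently available method --- stability, junta/spread approximations, hypergraph containers, or absorption --- is known to close this gap, which is why Conjecture \ref{conj} is still open in general and why the stability result of this paper is restricted to $k=3$.
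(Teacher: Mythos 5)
The statement you were asked about is Conjecture \ref{conj}, the Erd\H{o}s Matching Conjecture itself; the paper contains no proof of it (nor does the literature --- it is open for $k\ge 4$ in the intermediate range of $n$), and it is invoked only through the partial results quoted as Theorems \ref{erdos3} and \ref{bollobas}. Your text correctly recognizes this and is, by your own framing, a survey of known attack routes rather than a proof, so it cannot be accepted as a proof of the statement. Concretely, the two ranges you sketch are not actually closed by what you write. In the large-$n$ range, fixing a maximum matching with vertex set $T$, $|T|=k\nu(H)\le ks$, and bounding edges meeting $T$ gives only $e(H)\le ks\binom{n-1}{k-1}+O(s^2 n^{k-2})$, which overshoots the target $\binom{n}{k}-\binom{n-s}{k}\approx s\binom{n-1}{k-1}$ by a factor of about $k$; the whole difficulty in Bollob\'as--Daykin--Erd\H{o}s, Huang--Loh--Sudakov and Frankl's improvements is precisely to remove that factor, and your step ``invoking the $(k-1)$-uniform case, or a direct Erd\H{o}s--Ko--Rado/kernel estimate, gives $e(H)\le|H^k_{n,s}|$'' is an assertion, not an argument. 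In the small-$n$ range, the claim that a shifted family with more than $\binom{k(s+1)-1}{k}$ edges must contain $s+1$ disjoint edges via Kruskal--Katona and ``the way it reaches outside $[m]$ is constrained'' is likewise only a pointer to Frankl's program, with the extraction of the disjoint edges left entirely unexplained. Finally, you state explicitly that the regime $k\ge4$, $n$ roughly in $[(s+1)k,(2s+1)k]$, is beyond all current methods, which is an honest admission that the statement is not being proved.

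Since no complete proof of this conjecture exists, the right calibration here is not to pretend otherwise: the paper treats it as a conjecture and proves only the $k=3$ stability statement (Theorem \ref{main}) for large $n$, using shifting, the $\varepsilon$-closeness dichotomy with respect to $H^3_{n,s}$ and $D^3_{n,s}$, and fractional-matching/round-randomization tools. If your goal was to summarize the state of the art and the standard reduction via shifting, your account is broadly accurate; but as a proof attempt for the stated conjecture it has an unavoidable and acknowledged gap, and the two range arguments you do sketch would each need the substantial quantitative work of the cited papers to be turned into proofs even of the known cases.
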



There have been recent
activities on the Erd\H{o}s Matching Conjecture, see \cite{AFHRRS,BDE,EKR,EG,E65,F13,F,F17,FLM12,FK19,FK,HLS,LM,LYY,FRR}.  The Erd\H{o}s Matching Conjecture was verified by Erd\H os and Gallai \cite{EG} for $k=2$. For $k\geq 3$, it was proved by Bollob\'as, Daykin and Erd\H os \cite{BDE} for $n\geq2k^3s$.  Subsequently, Huang, Loh and Sudakov settled the conjecture for $n\geq3k^2s$. In 2013, Frankl \cite{F13} verified the conjecture for $n\geq(2s+1)k-s$. Currently the best range is $n\geq \frac{5}{3}sk-\frac{2}{3}s$ by Frankl and Kupavskii \cite{FK}.
As for the special case of $k=3$, Frankl, R\"odl and Ruci\'nski \cite{FRR} proved the conjecture for $n\geq 4s$.  In particular, the Erd\H{o}s Matching Conjecture  was settled  for $k = 3$ and sufficiently large $n$ in \cite{LM}, and finally, it was completely resolved  for $k=3$ in \cite{F17}.

\begin{thm}[\L uczak and Mieczkowska \cite{LM}]\label{erdos3}
	There exists an integer $n_0$ such that the following holds. Let $H$ be a $3$-graph on $n\geq n_0$ vertices and $s$ be an integer with $1\leq s\leq (n-2)/3$, if $\nu(H)\leq s$, then
	\begin{equation*}
		e(H)\leq \max\left\{\binom{n}{3}-\binom{n-s}{3},\binom{3s+2}{3}\right\}.	
	\end{equation*}
\end{thm}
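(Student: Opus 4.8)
The plan is to combine the shifting technique with a stability argument, splitting the range of $s$ at the point where the two extremal constructions change roles. Applying the compression operators $S_{ij}$, which never decrease $e(H)$ and never increase $\nu(H)$, we may assume $H$ is left-compressed; a left-compressed $3$-graph with $\nu(H)\le s$ then has every edge meeting $[3s]$, since otherwise the down-set closure of an edge disjoint from $[3s]$, together with the canonical matching $\{1,2,3\},\{4,5,6\},\dots,\{3s-2,3s-1,3s\}$, would form a matching of size $s+1$. For $s\le (n-3)/5$ the bound is already given by Frankl's theorem \cite{F13} (applied with $k=3$, since then $n\ge 5s+3=(2s+1)k-s$), and the cover construction $H^3_{n,s}$ is extremal there; so the remaining work is the dense regime $(n-3)/5<s\le (n-2)/3$, where $s=\Theta(n)$. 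Writing $s=cn$, one checks that $\binom{n}{3}-\binom{n-s}{3}$ and $\binom{3s+2}{3}$ are asymptotically equal exactly when $26c^{2}+3c-3=0$, i.e.\ at $c_{0}=(\sqrt{321}-3)/52\approx 0.287$, with $H^3_{n,s}$ the larger for $c<c_{0}$ and $D^3_{n,s}$ the larger for $c>c_{0}$.

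The core is an approximate structure theorem for the dense regime: for every $\varepsilon>0$ there are $\beta>0$ and $n_{0}$ so that if $n\ge n_{0}$, $\nu(H)\le s$ and $e(H)\ge \max\{\binom{n}{3}-\binom{n-s}{3},\ \binom{3s+2}{3}\}-\beta n^{3}$, then $H$ becomes a subhypergraph of $H^3_{n,s}$ or of $D^3_{n,s}$ after the deletion of at most $\varepsilon n^{3}$ edges. One natural route is to pass to a bounded reduced hypergraph via weak hypergraph regularity: the hypothesis $\nu(H)\le s$ caps the (fractional) matching number of the reduced hypergraph, and the fractional relaxation of the Erd\H{o}s Matching Conjecture --- whose extremal value agrees with the integral one throughout this regime and which has a short LP-duality proof, cf.\ \cite{AFHRRS} --- then forces the reduced hypergraph either to be too sparse to support the density hypothesis, or else to be close to a weighted blow-up of one of the two extremal families; transferring back gives the claimed closeness.

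It remains to upgrade the approximate structure to the exact bound: assuming for contradiction that $e(H)$ exceeds the claimed maximum, the stability theorem applies, and after a standard cleaning step making all relevant degrees near-uniform we argue as follows. If $H$ is $\varepsilon n^{3}$-close to $H^3_{n,s}$, let $W$ with $|W|=s$ be the near-cover; if $W$ covers $E(H)$ we are done, and otherwise pick an edge $e_{0}$ disjoint from $W$, observe that for each $w\in W$ a disjoint family $e_{0},f_{1},\dots,f_{s}$ with $w_{i}\in f_{i}$ would be a matching of size $s+1$, and conclude by a defect-Hall argument that the degrees around $W$ must have a deficit exceeding the number of edges of $H$ avoiding $W$; hence $e(H)<\binom{n}{3}-\binom{n-s}{3}$ unless $W$ is an exact vertex cover. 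If instead $H$ is $\varepsilon n^{3}$-close to $D^3_{n,s}$, write $U$ with $|U|=3s+2$ for the near-clique; any edge of $H$ meeting $U$ in at most two vertices, together with a near-perfect matching inside $U$ that avoids the rest of that edge, would give a matching of size $s+1$, so every edge of $H$ lies in $\binom{U}{3}$ and $e(H)\le\binom{3s+2}{3}$. The main obstacle is the stability step when $c$ is close to $c_{0}$, where the two extremal sizes nearly coincide: one must be able to decide which of the two families $H$ resembles while ruling out genuinely mixed near-extremal configurations, and it is precisely the quantitative strength needed to absorb the slack $\beta n^{3}$ that requires $n$ to be sufficiently large.
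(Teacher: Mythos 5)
The theorem is an external citation, proved in \cite{LM} and not re-proved in the present paper; the relevant pieces of the Łuczak--Mieczkowska argument appear here as Theorems~\ref{closeclique} and~\ref{shift}. Your high-level architecture---compress, dispatch the sparse range $n\ge 5s+3$ via Frankl~\cite{F13}, then in the dense range prove a stability theorem and upgrade it to the exact bound---does mirror \cite{LM}, and your crossover computation $c_0=(\sqrt{321}-3)/52$ and the observation that compression forces every edge of a family with $\nu\le s$ to meet $[3s]$ are both correct. The genuine gap is the ``core'' of your sketch, the stability theorem itself. You propose weak hypergraph regularity, an asserted bound on the fractional matching number of the reduced hypergraph, an appeal to a ``fractional Erd\H{o}s Matching Conjecture'' said to have ``a short LP-duality proof,'' and a transfer back. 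Several links here are unsupported. Relating $\nu(H)\le s$ to the fractional matching number of the reduced hypergraph requires a matching-specific embedding/counting lemma under weak (Chung-type) hypergraph regularity, which must be stated and proved, not assumed. More seriously, LP duality gives $\nu^*=\tau^*$ but does \emph{not} give the extremal-value statement you invoke: determining the maximum total weight of a weighted $3$-graph with bounded fractional matching number is itself a nontrivial extremal problem; its agreement with the integral optimum throughout this regime is not a formality; and the stability form (``close to a weighted blow-up of $H^3_{n,s}$ or $D^3_{n,s}$'') is yet another unproved assertion. Nothing of this sort is carried out in \cite{AFHRRS}, which treats a degree-threshold problem. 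By contrast, \cite{LM} proves stability by a direct counting argument on stable $3$-graphs (Theorem~\ref{shift}), exploiting precisely the ``every edge meets $[3s]$'' structure that your Step~2 establishes and your Step~4 then discards.

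Your exact-bound upgrade is also thinner than stated. In the $D^3_{n,s}$ case, the conclusion ``so every edge of $H$ lies in $\binom{U}{3}$'' requires that $H[U]$ contain a matching of size $s$ avoiding any prescribed pair of vertices of $U$; since $H[U]$ is only $\varepsilon n^3$-close to complete this needs an actual argument (compare Lemma~\ref{closecli2} here), not an assertion. In the $H^3_{n,s}$ case, the ``defect-Hall'' step must quantitatively compare the total deficit in the links of vertices of $W$ to the count of edges avoiding $W$, with the preliminary cleaning calibrated so the two error terms are comparable; this bookkeeping is exactly where the difficulty resides in \cite{LM}. Both are fixable in principle, but as written they are plans, not proofs.
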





Bollob\'as, Daykin and Erd\H os \cite{BDE} proved a stability result of Conjecture \ref{conj} for $n>2k^3s$.

\begin{thm}[Bollob\'as, Daykin and Erd\H os \cite{BDE}]\label{bollobas}
	Let $n,s,k$ be three positive integers such that $k\geq2$ and $n> 2k^3s$. If $H=(V,E)$ is a $k$-graph on $n$ vertices, $\nu(H)\leq s$ and
	\begin{equation}\label{hmkedge}
		e(H)>\binom{n}{k}-\binom{n-s}{k}-\binom{n-s-k}{k-1}+1,
	\end{equation}
	then $\tau(H)\leq s$.
\end{thm}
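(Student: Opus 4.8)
I would prove the contrapositive: if $n>2k^3s$, $\nu(H)\le s$ and $\tau(H)\ge s+1$, then
$e(H)\le\binom{n}{k}-\binom{n-s}{k}-\binom{n-s-k}{k-1}+1$ (the negation of \eqref{hmkedge}), arguing by induction on $s$. When $s=1$ the hypotheses say exactly that $H$ is intersecting and is not a star, and the statement reduces to the Hilton--Milner theorem, $e(H)\le\binom{n-1}{k-1}-\binom{n-k-1}{k-1}+1$, which is the claimed bound since $\binom{n}{k}-\binom{n-1}{k}=\binom{n-1}{k-1}$.

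For the inductive step, assume the statement for $s-1$. The key is a reduction that decreases $s$: if some vertex $v$ lies in \emph{every} maximum matching of $H$ --- in particular whenever $\nu(H)\le s-1$, where any $v$ works --- then $\nu(H-v)\le s-1$, while $\tau(H-v)\ge\tau(H)-1\ge s$ and $n-1>2k^3(s-1)$, so the inductive hypothesis applies to $H-v$ and yields $e(H-v)\le\binom{n-1}{k}-\binom{n-s}{k}-\binom{n-s-k}{k-1}+1$; since $e(H)=d_H(v)+e(H-v)$, $d_H(v)\le\binom{n-1}{k-1}$ and $\binom{n-1}{k-1}+\binom{n-1}{k}=\binom{n}{k}$, the desired bound on $e(H)$ follows. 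Hence it remains to treat the case where $\nu(H)=s$ but no vertex lies in every maximum matching, i.e.\ $\nu(H-v)=s$ for all $v$.

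This residual ``matching-covered'' case is where I expect the real difficulty. Fix a maximum matching $M=\{f_1,\dots,f_s\}$ and set $U=V(M)$, so $|U|=sk$ and $U$ is a vertex cover. Edges meeting $U$ in $\ge 2$ vertices number at most $\binom{sk}{2}\binom{n-2}{k-2}=O_k(s^2n^{k-2})$. For an edge meeting $U$ in a single vertex $u\in f_i$: maximality of $M$ forces the ``links into $[n]\setminus U$'' of the $k$ vertices of each $f_i$ to be pairwise cross-intersecting (two disjoint such links through different vertices of $f_i$ would replace $f_i$ and enlarge $M$), so all but one vertex of each $f_i$ contributes only $O_k(n^{k-2})$; meanwhile the residual hypothesis forbids a vertex $w$ with $d_H(w)>\binom{n-1}{k-1}-\binom{n-sk-1}{k-1}$, since a maximum matching avoiding such a $w$ could be extended through $w$ by an edge disjoint from it (there are $\binom{n-sk-1}{k-1}$ available $(k-1)$-sets, more than the number of non-edges through $w$), contradicting $\nu(H)=s$. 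Combining, $e(H)\le s\big(\binom{n-1}{k-1}-\binom{n-sk-1}{k-1}\big)+O_k(s^2n^{k-2})$, and one finishes by checking, using $n>2k^3s$, that this is at most $\binom{n}{k}-\binom{n-s}{k}-\binom{n-s-k}{k-1}+1$.

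The main obstacle is precisely this last verification across the whole range $n>2k^3s$: when $s$ is of order $n/k^3$ the error $O_k(s^2n^{k-2})$ is not automatically swallowed by $\binom{n-s-k}{k-1}$, so one needs a sharper handle on how edges attach to $U$ (also exploiting maximality of $M$ for edges hitting two matching edges, or treating large $s$ by a separate device). A convenient alternative bookkeeping for the residual case is the ``greedy'' identity: choosing $v_1,\dots,v_s$ of successively maximum degree and $H^{*}=H-\{v_1,\dots,v_s\}$ (nonempty, since $\tau(H)>s$), one has $e(H)=\sum_{i=1}^{s}d_{H_{i-1}}(v_i)+e(H^{*})$ with $H_{i-1}=H-\{v_1,\dots,v_{i-1}\}$, so the target bound is equivalent to $\sum_{i=1}^{s}\big(\binom{n-i}{k-1}-d_{H_{i-1}}(v_i)\big)\ge e(H^{*})+\binom{n-s-k}{k-1}-1$, which can be attacked by showing that if the left-hand side (a sum of ``link deficiencies'') were too small, one could greedily route each $v_i$ through a missing $(k-1)$-set and, together with an edge of $H^{*}$, build a matching of size $s+1$. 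Apart from this residual analysis, the proof is routine binomial manipulation.
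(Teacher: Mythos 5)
This theorem is quoted in the paper from Bollob\'as, Daykin and Erd\H{o}s \cite{BDE} and is not proved there, so there is no paper-internal argument to compare your attempt against; I can only assess the attempt on its own merits.

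Your reduction framework is sound and well set up. The base case $s=1$ is exactly Hilton--Milner, and the inductive step is clean: if some vertex $v$ lies in every maximum matching (in particular if $\nu(H)\le s-1$), then $\nu(H-v)\le s-1$, $\tau(H-v)\ge s$, $n-1>2k^3(s-1)$, and $e(H)=d_H(v)+e(H-v)\le\binom{n-1}{k-1}+\binom{n-1}{k}-\binom{n-s}{k}-\binom{n-s-k}{k-1}+1$, which is the target via Pascal's rule. The isolation of the residual ``matching-covered'' case ($\nu(H-v)=s$ for every $v$) is also the right move, and within it the two structural observations — cross-intersection of the links into $[n]\setminus V(M)$ across a matching edge, and the degree cap $d_H(w)\le\binom{n-1}{k-1}-\binom{n-sk-1}{k-1}$ forced by the existence of a maximum matching avoiding $w$ — are correct and genuinely useful.

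The gap, which you flag yourself, is real and not merely a matter of tightening constants. Your bound in the residual case is $e(H)\le s\bigl(\binom{n-1}{k-1}-\binom{n-sk-1}{k-1}\bigr)+O_k(s^2n^{k-2})$, and the target is $\binom{n}{k}-\binom{n-s}{k}-\binom{n-s-k}{k-1}+1$. Since $\binom{n}{k}-\binom{n-s}{k}=\sum_{i=0}^{s-1}\binom{n-1-i}{k-1}$ already sits $\Theta_k(s^2n^{k-2})$ \emph{below} $s\binom{n-1}{k-1}$, the quadratic-in-$s$ error from edges with $\ge 2$ vertices in $U$ (and from the non-extremal vertices of each $f_i$) has to be beaten by $s\binom{n-sk-1}{k-1}-\binom{n-s-k}{k-1}$ with the right explicit constant, and when $s$ is of order $n/k^3$ the implied constants (which grow like $k^2$ from $\binom{sk}{2}\binom{n-2}{k-2}$) do not fall below the available slack. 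The alternative greedy/link-deficiency identity you mention at the end is plausible and closer to the spirit of the original argument, but it is only sketched: the claim that a small total deficiency lets one ``greedily route each $v_i$ through a missing $(k-1)$-set and combine with an edge of $H^*$'' is exactly the hard combinatorial step and would need to be proved, not asserted. As it stands the proposal does not constitute a complete proof of the residual case, and therefore not of the theorem.
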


\noindent\textbf{Remark:}
The condition (\ref{hmkedge}) is tight. Define $HM^k_{n,s}:=\big\{e\in\binom{[n]}{k}:e\cap[s-1]\neq\emptyset\big\}
\cup\big\{S\big\}\cup\big\{e\in\binom{[n]}{k}:s\in e, e\cap S\neq\emptyset\big\}$, where $S:=\{s+1,\ldots,s+k\}$. Note that $e(HM^k_{n,s})
=\binom{n}{k}-\binom{n-s}{k}-\binom{n-s-k}{k-1}+1$, $\nu(HM^k_{n,s})=s$ and $\tau(HM^k_{n,s})=s+1$. Therefore, the condition (\ref{hmkedge}) is tight.


Hilton and Milner \cite{HM} proved that for a $k$-graph $H$, if $\nu(H)=1$ and $\tau(H)> 1$, then $e(H)\leq \binom{n-1}{k-1}-\binom{n-k-1}{k-1}+1$ for $n>2k$. Frankl and  Kupavskii \cite{FK19} proved that for $k\geq3$ and either $n\geq(s+\max\{25,2s+2\})k$ or $n\geq(2+o(1))sk$, where $o(1)$ is with respect to $s\rightarrow\infty$, if $H$ is a $k$-graph with $\nu(H)=s$ and $\tau(H)>s$, then $e(H)\leq\binom{n}{k}-\binom{n-s}{k}-\binom{n-s-k}{k-1}+1$. \par

Let $n,s,k$ be positive integers such that $k\geq2$ and $n
\geq ks+k-1$. For $2\leq i\leq k-1$, define $A^k_i(n,s):=\{e\in\binom{[n]}{k}:|e\cap[(s+1)i-1]|\geq i\}$.
Note that $\nu(A^k_i(n,s))=s$ and $\tau(A^k_i(n,s))> s$.
Frankl and Kupavskii \cite{FK19} proposed the following conjecture.
\begin{conj}[Frankl and Kupavskii \cite{FK19}]\label{conj1}
	Suppose that $H$ is a $k$-graph with set vertex $[n]$. If   $\nu(H)=s$ and $\tau(H)>s$, then
	\begin{equation*}
		e(H)\leq \max\{|A^k_2(n,s)|,\ldots,|A^k_{k-1}(n,s)|,|HM^k_{n,s}|,|D^k_{n,s}|\}.
	\end{equation*}
\end{conj}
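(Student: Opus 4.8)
The plan is to prove the contrapositive of Conjecture~\ref{conj1} in the case $k=3$: if $H$ is a $3$-graph on $[n]$ with $\nu(H)\le s$ and
\[
e(H)>N:=\max\bigl\{|A^3_2(n,s)|,\ |HM^3_{n,s}|,\ |D^3_{n,s}|\bigr\},
\]
then $\tau(H)\le s$. A few quick reductions come first. Since $e(H)>N\ge|D^3_{n,s}|=\binom{3s+2}{3}$, Theorem~\ref{erdos3} forces $e(H)\le\binom{n}{3}-\binom{n-s}{3}$, hence $\binom{n}{3}-\binom{n-s}{3}>\binom{3s+2}{3}$; a short computation shows this holds only for $s<(\gamma+o(1))n$ with an absolute constant $\gamma\approx 0.287$, so $\rho:=n-3(s-1)$ is linear in $n$. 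Applying Theorem~\ref{erdos3} with parameter $s-1$ shows that $\nu(H)\le s-1$ would give $e(H)\le\max\{\binom n3-\binom{n-s+1}{3},\binom{3s-1}{3}\}\le N$, a contradiction; hence $\nu(H)=s$. Moreover, if $n>54s$ then Theorem~\ref{bollobas} (with $k=3$, using $e(H)>N\ge|HM^3_{n,s}|=\binom n3-\binom{n-s}3-\binom{n-s-3}{2}+1$) already yields $\tau(H)\le s$, so we may assume $s$ is a positive fraction of $n$, say $n/54\le s<\gamma n$ (and, invoking \cite{FK19}, even $s\ge(\tfrac{1}{6}-o(1))n$). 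It therefore remains to show that $\nu(H)=s$, $s=\Theta(n)$ in the above window, and $\tau(H)>s$ together imply $e(H)\le\max\{|HM^3_{n,s}|,|D^3_{n,s}|\}$; since this contradicts $e(H)>N$, the contrapositive follows.

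For the core argument I would fix a maximum matching $M=\{e_1,\dots,e_s\}$, set $W_0=\bigcup_i e_i$ (so every edge of $H$ meets $W_0$), and split the edges: let $H_i$ consist of the edges $f$ with $f\cap W_0\subseteq e_i$, and let $H_{\ge 2}$ consist of the edges meeting at least two of the $e_j$, so that $e(H)=\sum_{i=1}^s e(H_i)+e(H_{\ge 2})$. Each $H_i$ is intersecting: disjoint edges $f,f'\in H_i$ are disjoint from $W_0\setminus e_i$, so $(M\setminus\{e_i\})\cup\{f,f'\}$ would be a matching of size $s+1$. Viewing $H_i$ as a $3$-graph on the $\rho=n-3(s-1)=\Theta(n)$ vertices not covered by $M\setminus\{e_i\}$, the Erd\H os--Ko--Rado and Hilton--Milner theorems yield a dichotomy: either $\tau(H_i)=1$, so $H_i$ lies in a star with some centre $w_i$ and $e(H_i)\le\binom{\rho-1}{2}$, or $\tau(H_i)\ge2$, so $e(H_i)\le\binom{\rho-1}{2}-\binom{\rho-4}{2}+1$ --- smaller by the substantial amount $\binom{\rho-4}{2}-1=\Theta(n^2)$.

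The decisive ingredient is a finer augmentation argument coupling $H_{\ge 2}$ to the star centres: if $\{u,v,w\}\in H$ with $u\in e_i$, $v\in e_j$, $i\ne j$, then the pairs $e_i\setminus\{u\}$ and $e_j\setminus\{v\}$ cannot both lie in many edges of $H$ reaching outside $W_0$, for otherwise one extracts disjoint triples $g\supseteq e_i\setminus\{u\}$, $h\supseteq e_j\setminus\{v\}$ with $g,h,\{u,v,w\}$ pairwise disjoint, making $(M\setminus\{e_i,e_j\})\cup\{g,h,\{u,v,w\}\}$ a matching of size $s+1$. Combined with the dichotomy above, this produces a clean alternative: either some blocks $H_i$ are ``spread'' (each removing $\Theta(n^2)$ edges), which together with a sharp estimate for $e(H_{\ge 2})$ drives $e(H)$ below $\max\{|HM^3_{n,s}|,|D^3_{n,s}|\}$; or essentially every $H_i$ is a star with centre $w_i$ and the cross-edges of $H_{\ge 2}$ are pinned to run through these centres, so that $\{w_1,\dots,w_s\}$ --- after minor surgery absorbing the few exceptional edges --- is a vertex cover of size $s$. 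The first alternative contradicts $e(H)>N$; the second gives $\tau(H)\le s$ directly; either way the proof is complete.

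I expect the main difficulty to be carrying this out in the regime where $s$ is a fixed fraction of $n$, where there is essentially no slack. The crude estimate $e(H)\le e(H_{\ge 2})+s\binom{\rho-1}{2}$ already overshoots $|HM^3_{n,s}|$, so the savings from each spread block and each constrained cross-edge must be counted exactly, not merely up to order of magnitude; and, unlike in the small-$s$ range covered by Theorem~\ref{bollobas}, the family $H_{\ge 2}$ is itself of order $n^3$ even in the extremal configuration $HM^3_{n,s}$ and cannot be discarded. Two further delicate points: in the window $s\approx\gamma n$ the quantities $|HM^3_{n,s}|$ and $|D^3_{n,s}|$ differ by only $\Theta(n^2)$, so there $H$ must be compared directly with a $D^3$-type configuration via a short separate argument; and one must verify that turning $\{w_1,\dots,w_s\}$ into an honest vertex cover never costs more than $s$ vertices --- precisely the point at which the exact, rather than approximate, nature of the conclusion $\tau(H)\le s$ is felt.
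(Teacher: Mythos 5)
Your proposal takes a genuinely different route from the paper. The paper proves the statement (as Theorem \ref{main}) by a stability argument: shifting and Theorem \ref{shift} force any $H$ with $\nu(H)\le s$ and near-extremal edge count to be $\varepsilon$-close to $H^3_{n,s}$ or $D^3_{n,s}$; the close case reduces to Theorems \ref{closeclique} and \ref{bollobas} via Lemma \ref{closecase}, and the non-close case (Lemma \ref{nonclose}) adjoins $r$ dummy vertices, produces many edge-disjoint fractional perfect matchings via Lemma \ref{frac}, and rounds them by the Frankl--R\"odl theorem to an integral almost perfect matching, yielding $\nu(H)>s$. You instead decompose around a fixed maximum matching $M$ and try to pay for the edge count from EKR/Hilton--Milner savings in each local intersecting family $H_i$ plus an augmentation argument on cross-edges.

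The reductions at the start of your proposal are fine, as is the observation that each $H_i$ is intersecting on $\rho=n-3s+3=\Theta(n)$ vertices and the resulting EKR/Hilton--Milner dichotomy. But the core of the argument is a heuristic, not a proof, and the gaps are not incidental. (i) The ``decisive ingredient'' is stated imprecisely: for $\{u,v,w\}\in H_{\ge2}$ with $w\in e_\ell$ for a third index $\ell\notin\{i,j\}$, the proposed augmented matching $(M\setminus\{e_i,e_j\})\cup\{g,h,\{u,v,w\}\}$ still collides with $e_\ell$; the argument does not address this, nor edges meeting three matching edges. (ii) You never produce the ``sharp estimate for $e(H_{\ge 2})$'' that your own plan requires; in the regime $s=\Theta(n)$ this term is $\Theta(n^3)$, the same order as $e(H)$, and in the extremal configurations $HM^3_{n,s}$ and $D^3_{n,s}$ it carries almost all of the weight, so crude bounds cannot close the count. (iii) The concluding alternative --- either enough blocks $H_i$ are spread to force $e(H)$ below the bound, or the star centres form a cover after ``minor surgery'' --- is asserted rather than derived, and there is no argument that these two alternatives are exhaustive; this is exactly where the theorem lives. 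Your final paragraph correctly identifies that every term ``must be counted exactly, not merely up to order of magnitude'' in the window $s\approx\gamma n$ where $|HM^3_{n,s}|$ and $|D^3_{n,s}|$ differ by only $\Theta(n^2)$ --- but that is the whole content of the result, and the proposal leaves all of it open. The difficulty of such a direct count in the dense regime is precisely why the paper resorts to stability plus a fractional-matching rounding argument.
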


We confirm Conjecture \ref{conj1} for $k=3$ and sufficiently large $n$.
\begin{thm}\label{main}
	There exists an integer $n_0$ such that the following holds. Let $H$ be a $3$-graph on $n\geq n_0$ vertices  and $s$ be an integer with  $n\geq 3s+2$. If $\nu(H)\leq s$ and $\tau(H)>s$, then
	\begin{equation*}\label{hm}
		e(H)\leq \max\left\{\binom{n}{3}-\binom{n-s}{3}-\binom{n-s-3}{2}+1,\binom{3s+2}{3}\right\}.
	\end{equation*}
\end{thm}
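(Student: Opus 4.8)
\textbf{Overall strategy.} The plan is to combine the (known) Erdős Matching Conjecture for $3$-graphs (Theorem~\ref{erdos3}) with a careful analysis of the cover structure. Since $\tau(H)>s$, any largest matching $M=\{e_1,\dots,e_t\}$ with $t=\nu(H)\le s$ has the property that $V(M)$ is a vertex cover, so $3t\ge\tau(H)>s$, i.e.\ $t\ge (s+1)/3$; in fact for sufficiently large $n$ we will want to know $t=s$, so the first step is to show that $\nu(H)=s$ (if $\nu(H)\le s-1$ one can delete a vertex from a smallest cover, apply Theorem~\ref{erdos3} to the remaining hypergraph on $n-1$ vertices with matching number $\le s-1$, and check the resulting bound is strictly smaller than the claimed maximum — this uses $n$ large). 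So assume $\nu(H)=s$ and fix a largest matching $M$ with vertex set $W:=V(M)$, $|W|=3s$.

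\textbf{Link/degree analysis on the matching.} For each edge $e_i\in M$, every edge of $H$ disjoint from $e_i$ would (together with $M\setminus e_i$) build a matching of size $s$ missing $e_i$; carefully tracking which edges can avoid which $e_i$ is the standard device. The key dichotomy: either some vertex $v\in W$ covers "most" of $H$ — more precisely, $H-v$ has matching number $\le s-1$ — in which case we recurse/apply Theorem~\ref{erdos3} to $H-v$ and add back $d_H(v)\le\binom{n-1}{2}$ edges, landing us essentially at $HM^3_{n,s}$ or at the "clique" $D^3_{n,s}$; or no single vertex is that heavy, which forces the edges meeting $W$ to be distributed in a constrained way. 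In the second case the $A^3_i$-type and $D^3_{n,s}$-type configurations must emerge: since $\tau(H)>s$ there is no cover of size $s$, so in particular $W$ minus any one vertex together with... — one shows that $H$ restricted to $W$ plus a bounded neighbourhood already has $\nu>s$ unless the edges outside a $(3s+2)$-set are severely limited, which pushes $e(H)$ below $\binom{3s+2}{3}$ unless $H\subseteq D^3_{n,s}$ up to few edges.

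\textbf{The Hilton–Milner / $HM$ regime.} The genuinely delicate case is when $H$ is "close to $H^3_{n,s}$" — i.e.\ almost all edges meet a fixed $(s-1)$-set — but $\tau(H)>s$. Here I would argue as in Bollobás–Daykin–Erdős (Theorem~\ref{bollobas}) and Frankl–Kupavskii: let $C$ be a minimum cover, $|C|=\tau(H)\ge s+1$; shifting/compression arguments reduce $H$ to a shifted hypergraph, and then a direct counting of edges through the "last" cover vertices, using $\nu(H)=s$ to bound the link hypergraphs, yields $e(H)\le e(HM^3_{n,s})=\binom{n}{3}-\binom{n-s}{3}-\binom{n-s-3}{2}+1$, with equality structure forcing $H\cong HM^3_{n,s}$. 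For $n$ large the cross terms ($\binom{n-s-3}{2}$ vs.\ lower-order corrections) are what make the inequality go through.

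\textbf{Main obstacle.} The hard part is the intermediate regime: $\nu(H)=s$, no vertex is heavy enough to recurse cleanly, yet $H$ is not close to either extremal family. Ruling this out requires a stability version of Theorem~\ref{erdos3} — one must show that a $3$-graph with $\nu(H)=s$ and $e(H)$ within $o(n^2)$ (or even within a constant times $n$) of the extremal value must be structurally close to $H^3_{n,s}$ or $D^3_{n,s}$ — and then upgrade "structurally close" to "exactly one of the listed configurations" using the extra hypothesis $\tau(H)>s$. Establishing and correctly applying this robust stability statement (controlling all the $O(n)$-order error terms so that the final comparison with $\max\{\,\cdot,\binom{3s+2}{3}\}$ is strict except at the extremal hypergraphs) is where essentially all the work lies; the large-$n$ assumption is used precisely to absorb these error terms.
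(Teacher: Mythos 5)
Your sketch correctly identifies the high-level shape of the argument --- reduce to $\nu(H)=s$, then split according to whether $H$ is close to $H^3_{n,s}$ or $D^3_{n,s}$ --- but the ``Main obstacle'' paragraph is, as you yourself say, exactly where the proof lives, and you do not supply it. More specifically, two essential ideas are missing.

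First, the stability statement you invoke (``a $3$-graph with $\nu(H)\le s$ and $e(H)$ within $o(n^3)$ of the extremum is structurally close to $H^3_{n,s}$ or $D^3_{n,s}$'') is only known, and only proved in the paper (Theorem~\ref{shift}), for \emph{shifted (stable)} $3$-graphs. You cannot simply shift $H$ and proceed, because shifting can destroy a matching that you later want to transfer back; the paper sidesteps this by working with fractional objects: it takes a minimum fractional vertex cover $\omega$ of an auxiliary hypergraph, builds the stable $3$-graph $H'$ consisting of all triples $e$ with $\sum_{v\in e}\omega(v)\ge 1$, and applies Theorem~\ref{shift} to $H'$. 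That device --- passing through LP duality to a canonically stable supergraph --- is the crux of Lemma~\ref{frac} and has no counterpart in your sketch. Second, even granting stability, the non-close case is not handled by ``link/degree analysis on the matching'': the paper instead adds $r$ universal vertices to form $H^3_r$, finds $n^{0.2}$ pairwise-compatible fractional perfect matchings in $H^3_r$ via Lemma~\ref{frac}, samples a random subgraph using their average as edge probabilities, verifies near-regularity and small codegree via Chernoff bounds, and then invokes the Frankl--R\"odl near-perfect matching theorem (Theorem~\ref{deg}) to extract an integral matching of size $>s+r$, contradicting $\nu(H)\le s$. This semi-random/rounding machinery is a different route entirely from the degree-counting you outline, and the crude bound $d_H(v)\le\binom{n-1}{2}$ you propose for the ``heavy vertex'' recursion is too weak to close the $D^3_{n,s}$-type cases when $s$ is near the lower end of the range $n/54\le s\le 13n/45$ (a range reduction you also need but do not state; it comes from comparing $\binom{n}{3}-\binom{n-s}{3}$ with $\binom{3s+2}{3}$ and using Theorems~\ref{erdos3} and~\ref{bollobas}).

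In short, the proposal is a plausible outline but leaves the two decisive technical steps --- the LP-duality trick that makes shifting-based stability usable, and the fractional-matching/randomized-rounding argument in the non-close case --- unproved, and these are precisely what the paper supplies.
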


Given two $k$-graphs $H_1, H_2$ and a real number $\varepsilon>0$, we say that $H_2$ is \textit{$\varepsilon$-close} to $H_1$ if $V(H_1) = V(H_2)$ and $|E(H_1)\backslash E(H_2)|\leq\varepsilon|V(H_1)|^k$. Specially,  a $k$-graph $H$ on $n$ vertices is \textit{$\varepsilon$-close} to $D^k_{n,s}$ (or $H^k_{n,s}$) if  there is a partition $U,W$ of $V(H)$ with $|U|=3s+2$ (or $|W|=s$) such that $H$ is \textit{$\varepsilon$-close} to $D^k_{n,s}(U)$ ($H^k_{n,s}(U,W)$, respectively).
Given $0<\theta<1$, we say a vertex $v\in V(H)$ is \emph{$\theta$-good} with respect to $H'$ if $|N_{H'} (v)\setminus N_H(v)|\leq \theta n^{k-1}$. Otherwise we say that $v$ is \emph{$\theta$-bad}. 
For a  $k$-graph $H$ and  $S\subseteq V(H)$, we use $H-S$ to denote the hypergraph obtained from $H$ by deleting $S$ and all edges of $H$ intersecting set $S$, and we use $H[S]$ to denote the sub-hypergraph with vertex set $S$ and edge set $\{e\in E(H) : e\subseteq S\}$. For a  $k$-graph $H$ and  $E\subseteq E(H)$, we use $H-E$ to denote the hypergraph obtained from $H$ by deleting $E$. By $x\ll y$ we mean that for any $y>0$ there exists $x_0>0$ such that for any $x<x_0$ the following statement holds. We omit the floor and ceiling functions when they do not affect the proof.

The rest of the paper is organized as follows. In Section 2, we describe the so-called shifting method, which is a well-known technique in extremal set theory. The proof of Theorem \ref{main} will be divided into two parts depending on whether the graph is close to extremal graphs in Sections 3 and 4. In Section 3, we prove Theorem \ref{main} for the case $H$ is close to $H^3_{n,s}$ or $D^3_{n,s}$. For the
 case that $H$ is not close to  $H^3_{n,s}$ or $D^3_{n,s}$, we complete the proof with three steps in Section 4: firstly, we construct a 3-graph $H'$ such that $H$ has a matching of size $s$ if and only if $H'$ has an almost perfect matching;  secondly, we use  a recent approach of Han-Kohayakawa-Person \cite{HKP18} and Han \cite{Han19} to find  edge-disjoint fractional perfect matchings in $H'$; finally, we use   the round randomization method of Alon, Frankl, Huang, R\"odl,
Ruci\'nski, and Sudakov \cite{AFHRRS} to convert fractional perfect matchings into an integral almost perfect matching.


\section{shifting}
Let $H$ be a $k$-graph on vertex set $[n]$. For vertices $1\leq i<j\leq n$, we define the \emph{$(i,j)$-shift} $S_{ij}$ by $S_{ij}(H)=\{S_{ij}(e):e\in E(H)\}$, where
$$S_{ij}(e)=\begin{cases}
	e\setminus \{j\}\cup \{i\}, & \text{if $j\in e$, $i\notin e$ and $e\setminus \{j\}\cup \{i\}\notin E(H)$;}\\
	e, &\text{otherwise.}
\end{cases}$$
The following well-known result can be found in \cite{F95}.
\begin{lm}\label{shiftprop}
	The $(i,j)$-shift satisfies the following properties.
	\begin{enumerate}[itemsep=0pt,parsep=0pt,label=$($\roman*$)$]
				\item $e(H)=e(S_{ij}(H))$ and $|e|=|S_{ij}(e)|$,
				\item $\nu(S_{ij}(H))\leq \nu(H)$.
	\end{enumerate}
\end{lm}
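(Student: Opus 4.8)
The plan is to establish the two items directly from the definition of the $(i,j)$-shift: item (i) by an injectivity argument, and item (ii) by a short case analysis applied to a maximum matching of $S_{ij}(H)$.

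For item (i), the equality $|S_{ij}(e)|=|e|$ is immediate from the definition, since either $S_{ij}(e)=e$, or $S_{ij}(e)=e\setminus\{j\}\cup\{i\}$ with $j\in e$ and $i\notin e$, and the cardinality is preserved in both cases. For $e(H)=e(S_{ij}(H))$ it suffices to show that the map $e\mapsto S_{ij}(e)$ is injective on $E(H)$. Suppose $e,f\in E(H)$ are distinct with $S_{ij}(e)=S_{ij}(f)$. If neither edge is moved, then $e=S_{ij}(e)=S_{ij}(f)=f$, a contradiction. If exactly one of them, say $f$, is moved, then $e=S_{ij}(e)=S_{ij}(f)=f\setminus\{j\}\cup\{i\}$; but the defining clause of the shift guarantees that this set is not in $E(H)$, contradicting $e\in E(H)$. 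If both are moved, then $e\setminus\{j\}\cup\{i\}=f\setminus\{j\}\cup\{i\}$ with $j\in e\cap f$ and $i\notin e\cup f$, which forces $e=f$, again a contradiction. Hence the map is a bijection onto $S_{ij}(H)$, so $e(S_{ij}(H))=e(H)$.

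For item (ii), let $M'$ be a maximum matching in $S_{ij}(H)$ and set $m:=|M'|=\nu(S_{ij}(H))$. By item (i), each $e'\in M'$ has a unique preimage $e\in E(H)$ with $S_{ij}(e)=e'$. Call $e'\in M'$ \emph{bad} if $e'\notin E(H)$; then $e\neq e'$, so $j\in e$, $i\notin e$, and $e'=e\setminus\{j\}\cup\{i\}$ contains $i$. Since $M'$ is a matching, at most one of its edges contains $i$, so there is at most one bad edge. If there is none, then $M'\subseteq E(H)$ and $\nu(H)\geq m$. Otherwise let $e_1'$ be the unique bad edge, with preimage $e_1\in E(H)$, and replace $e_1'$ by $e_1$. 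If no other edge of $M'$ meets $j$, then $\{e_1\}\cup(M'\setminus\{e_1'\})$ is a matching of size $m$ inside $E(H)$, because $e_1\setminus\{j\}=e_1'\setminus\{i\}$ is disjoint from every other edge of $M'$ and $j$ lies in none of them. If some (necessarily unique) edge $e_2'\in M'$ contains $j$, then $i\notin e_2'$, and the preimage of $e_2'$ cannot have been moved (the image of a moved edge contains $i$, not $j$), so $e_2'$ is a fixed point of $S_{ij}$; by the defining "otherwise" clause this forces $\tilde e_2:=e_2'\setminus\{j\}\cup\{i\}\in E(H)$. One then checks that $\{e_1,\tilde e_2\}\cup(M'\setminus\{e_1',e_2'\})$ is a matching of size $m$ contained in $E(H)$: since $e_1\setminus\{j\}\subseteq e_1'$, $\tilde e_2\setminus\{i\}\subseteq e_2'$, the vertex $j$ lies in no edge of $M'\setminus\{e_2'\}$, and the vertex $i$ lies in no edge of $M'\setminus\{e_1'\}$, the listed edges are pairwise disjoint and nonempty, hence distinct. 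In every case $\nu(H)\geq m=\nu(S_{ij}(H))$.

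I do not expect a genuine obstacle here, as both statements are folklore; the only point requiring care is the last case of item (ii), where the edge of $M'$ through $j$ must be identified as a fixed point of the shift so that its defining clause supplies an edge of $H$ through $i$ to substitute. Once that observation is made, the remaining disjointness bookkeeping is routine.
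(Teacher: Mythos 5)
Your proof is correct. Note that the paper itself gives no argument for this lemma: it is stated as a well-known fact with a pointer to Frankl's survey \cite{F95}, so there is no in-paper proof to compare against. Your argument is the standard one: injectivity of $e\mapsto S_{ij}(e)$ for item (i), and for item (ii) the classical replacement argument on a maximum matching of $S_{ij}(H)$ — at most one edge of the matching is a genuinely shifted edge (it is the unique one through $i$), and if the matching also uses an edge through $j$, that edge must be a fixed point of the shift, so the ``otherwise'' clause supplies the edge $e_2'\setminus\{j\}\cup\{i\}\in E(H)$ needed for the exchange. The case analysis and the disjointness checks, including the subtle sub-case where both $i$ and $j$ are covered by $M'$, are all handled correctly, so the write-up is a complete proof of the cited lemma.
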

A $k$-graph $H$ is called \emph{stable} if $H=S_{ij}(H)$ for all $1\leq i<j\leq n$. It is not difficult to see that if $H$ is a stable $k$-graph, then for any subsets $\{u_1,\ldots,u_k\}, \{v_1,\ldots,v_k\}\subset[n]$ such that $u_i\leq v_i$ for $i\in[k]$, $\{v_1,\ldots,v_k\}\in E(H)$ implies $\{u_1,\ldots,u_k\}\in E(H)$.

\section{$\varepsilon$-close case}
When $H$ is $\varepsilon$-close to $D^k_{n,s}$, the proof of Lemma 2 in \cite{LM} implies the following theorem.

\begin{thm}[\L uczak and Mieczkowska \cite{LM}]\label{closeclique}
	For any given integer $k\geq 3$, there exist $\varepsilon>0$ and a positive integer $n_0$ such that the following holds. Let $H$ be a $k$-graph on $n>n_0$ vertices and $s$ be an integer with $n(1/k-1/(2k^2))\leq s\leq (n-k+1)/k$. If $H$ has a complete subgraph of size at least $(1-\varepsilon)ks$ and $\nu(H)\leq s$, then
	\begin{equation*}
		e(H)\leq\binom{ks+k-1}{k}.
	\end{equation*}
\end{thm}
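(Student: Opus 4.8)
\medskip
\noindent\textbf{Proof plan.}
Let $C$ be a complete subgraph of $H$ of order $c:=|C|\ge(1-\varepsilon)ks$ and set $W:=V(H)\setminus C$. Since $\binom{C}{k}\subseteq E(H)$, the clique $C$ already contains $\lfloor c/k\rfloor$ pairwise disjoint edges, so $\nu(H)\le s$ forces $c\le k(s+1)-1$; put $p:=k(s+1)-1-c\ge0$. As $e(H)\ge\binom{c}{k}$ and, by Vandermonde, $\binom{k(s+1)-1}{k}-\binom{c}{k}=\sum_{a=0}^{k-1}\binom{c}{a}\binom{p}{k-a}$, the task is to bound the number of edges of $H$ meeting $W$ by $\sum_{a=0}^{k-1}\binom{c}{a}\binom{p}{k-a}$. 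The borderline case $p=0$, i.e.\ $c=k(s+1)-1$, is immediate and sets the template: any edge $e$ meeting $W$ has $|e\cap C|\le k-1$, so $C\setminus e$ contains a matching of size $\lfloor(c-k+1)/k\rfloor=s$ disjoint from $e$, giving a matching of size $s+1$ in $H$; hence no edge meets $W$ and $e(H)=\binom{k(s+1)-1}{k}$.

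\medskip
For general $c$ the plan rests on two structural facts, both consequences of $\nu(H)\le s$ together with the presence of the clique. The first is a \emph{localization} statement: every edge $e$ of $H$ meeting $W$ satisfies $|e\cap W|\le p$. When $c\ge ks$ this follows exactly as in the case $p=0$, deleting $e\cap C$ from the clique and completing with a maximum matching of $C\setminus e$; when $c<ks$ a variant is needed, first stripping a partial matching from $C$ and then arguing in the residual hypergraph, whose matching number is at most $k(s+1)-c$. The second is a \emph{scarcity} statement for high-degree vertices of $W$: if $w_1,\dots,w_m\in W$ each lie in at least $\gamma n^{k-1}$ edges of $H$ (for a suitable $\gamma=\gamma(\varepsilon,k)$ and $n$ large), one greedily selects pairwise disjoint edges $e_1\ni w_1,\dots,e_m\ni w_m$; these occupy at most $m(k-1)$ vertices of $C$, and completing with a maximum matching of $C$ on the remaining vertices yields a matching of size at least $m+\lfloor(c-m(k-1))/k\rfloor\ge c/k+m/k-1$, whence $m\le k(s+1)-c=p+1$. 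So at most $p+1$ vertices of $W$ are ``rich'', and each of their localized links into $C$ can contribute only a controlled amount.

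\medskip
The proof then finishes with a counting step. Edges of $H$ meeting $W$ are split into those through a rich vertex of $W$ and those through poor vertices only: the poor part is at most $|W|\cdot\gamma n^{k-1}$, which is small because the hypothesis $n\big(\tfrac1k-\tfrac1{2k^2}\big)\le s$ forces $|W|=n-c\le\big(\tfrac1{2k}+\varepsilon\big)n$; the rich part is estimated term by term against the Vandermonde sum, using $|e\cap W|\le p$ and the bound $p+1$ on the number of rich vertices, and for $n_0$ large and $\varepsilon$ small the two contributions together do not exceed $\sum_{a=0}^{k-1}\binom{c}{a}\binom{p}{k-a}$. I expect this balancing to be the main obstacle: when $c$ lies strictly between $(1-\varepsilon)ks$ and $k(s+1)-1$, the available budget $\binom{k(s+1)-1}{k}-\binom{c}{k}$ and the a~priori contribution of the edges meeting $W$ are of the same order, so the localization of $W$-edges toward $C$, the scarcity of rich vertices and the exact value of $c$ must all be exploited simultaneously and essentially optimally; moreover the small-$p$ regimes (already $p=1$, where a single vertex of $W$ with nearly full link into $C$ saturates the bound) and the range $c<ks$ require extra care. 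This is exactly the estimate performed in the proof of Lemma~2 of \cite{LM}, from which the theorem follows.
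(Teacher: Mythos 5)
The paper does not prove Theorem~\ref{closeclique} itself --- it simply observes that the statement follows from the proof of Lemma~2 in \cite{LM}. Your proposal instead tries to sketch a self-contained argument, and while the structural scaffolding you set up is sound, the counting step that you yourself flag as ``the main obstacle'' is not carried out and does not close with the estimates you write down.

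Concretely: let $c=|C|$, $p=k(s+1)-1-c$, $W=V(H)\setminus C$. Your localization ($|e\cap W|\le p$) and scarcity claims are correct (in fact one can sharpen: a maximum matching $M'$ of edges meeting $W$ satisfies $|V(M')\cap W|\le p$, so at most $p$ vertices of $W$ can be rich, and one may take $m\le p$ rather than $p+1$). The trouble is the final estimate. For the scarcity argument to force non-rich vertices to be poor you need $\gamma n^{k-1}$ to dominate $|V(M')|\cdot\binom{n-2}{k-2}=\Theta(p\,n^{k-2})$, so $\gamma=\Omega(p/n)$. Then the poor part $|W|\gamma n^{k-1}=\Theta(p\,n^{k-1})$ is of the \emph{same order} as the Vandermonde budget $\sum_{a}\binom{c}{a}\binom{p}{k-a}\approx p\binom{c}{k-1}$, so the constants matter --- and they do not cooperate. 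Take $k=3$, $p=1$, $s\approx 5n/18$, so $c\approx 5n/6$, $|W|\approx n/6$, and the budget is $\binom{c}{2}\approx 25n^2/72$. The minimal admissible threshold is $\gamma\approx 2|V(M')\cap C|/n\approx 5/(3n)$, giving a poor-part bound $\approx |W|\cdot\frac{5}{3n}\cdot n^2\approx 5n^2/18$. Meanwhile the single rich vertex can a~priori contribute up to $\binom{c}{2}$, already saturating the entire budget. So ``poor $+$ rich'' as you bound them is roughly $1.8$ times the budget, not below it. The actual argument must exploit an interaction your bounds ignore: e.g.\ for $p=1$, if one $W$-vertex has a nearly full link into $C$, the matching bound $|M'|\le 1$ forces every other $W$-vertex to have degree $0$, not merely degree $<\gamma n^{k-1}$. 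Encoding this interplay uniformly in $p$ and $c$ is precisely the delicate estimate in Lemma~2 of \cite{LM}, and it is missing from the proposal. In short: the structural facts are right, but the proposal's counting step has a genuine gap, and the naive split into ``poor'' and ``rich'' contributions provably overshoots the target.
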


To deal with the case that $H$ is $\varepsilon$-close to $D^k_{n,s}$, we also need the following lemma.
\begin{lm}\label{closecli2}
	Let $k\geq 3$ be an integer and $\varepsilon,c$ be reals  such that $0< \varepsilon\ll c\ll 1/k$. Let $n,s$ be integers such that $n/2k^3\leq s\leq (1-c)n/k$. Let $H$ be a $k$-graph on $n$ vertices and $U$ be a subset of $V(H)$ of size $(ks+k-1)$. If $H$ is $\varepsilon$-close to $D^k_{n,s}(U)$ and there are at least $\varepsilon^{\frac{1}{2k}} n/(k+1)$ vertices in $V(H)\setminus U$ with degree at least $\varepsilon^{\frac{1}{2k}} n^{k-1}$, then $\nu(H)> s$.
\end{lm}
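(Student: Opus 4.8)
The plan is to exploit the near-extremal structure of $H$ together with a handful of high-degree vertices outside $U$ to greedily assemble a matching of size $s+1$. Write $U' := V(H)\setminus U$, so $|U'| = n - (ks+k-1)$. Since $H$ is $\varepsilon$-close to $D^k_{n,s}(U)$, all but at most $\varepsilon n^k$ edges of $D^k_{n,s}(U)$ lie in $H$; in particular $H[U]$ is a very dense $k$-graph on $ks+k-1$ vertices, and standard counting shows $H[U]$ contains a matching $M_0$ of size $s$ covering all but $O(\varepsilon^{1/2}n)$ \textit{bad} vertices of $U$ — indeed, one removes the $O(\varepsilon^{1/2}n)$ vertices of $U$ that are incident to an abnormally large number of non-edges of $D^k_{n,s}(U)$, and on the remaining set applies a simple greedy argument, since a dense $k$-graph on $m$ vertices with minimum degree close to $\binom{m-1}{k-1}$ has a matching missing only $o(m)$ vertices. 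So the first step is to extract such an $M_0$ of size $s$ inside $U$, together with the guarantee that at least $ks - 1 - O(\varepsilon^{1/2}n)$ vertices of $U$ are \textit{typical}, i.e. contained in $\Omega(n^{k-1})$ edges of $H$ within $U$.

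The second step is to use the hypothesis to produce one more disjoint edge meeting $U'$. Let $W \subseteq U'$ be the set of vertices of degree at least $\varepsilon^{1/2k} n^{k-1}$; by assumption $|W| \ge \varepsilon^{1/2k} n/(k+1)$. I would first pick a vertex $w_1 \in W$ and build an edge $e_1 \ni w_1$ using only typical vertices: since $d_H(w_1) \ge \varepsilon^{1/2k} n^{k-1}$ and the number of $(k-1)$-sets meeting the $O(\varepsilon^{1/2}n)$ atypical vertices or the $k$ vertices already forbidden is $O(\varepsilon^{1/2} n^{k-1}) = o(\varepsilon^{1/2k} n^{k-1})$ (here one uses $\varepsilon \ll c$ so that $\varepsilon^{1/2} \ll \varepsilon^{1/2k}$), there is an edge $e_1$ through $w_1$ all of whose other $k-1$ vertices are typical vertices of $V(H)$; some of those may lie in $U$. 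Now I greedily repair $M_0$: each vertex of $e_1 \cap U$ lies in $M_0$ in some edge $f$, and since the other $k-1$ vertices of $f$ are typical, and only $O(\varepsilon^{1/2}n) + O(1)$ vertices are currently forbidden, $f$ can be rerouted to a new disjoint edge avoiding $e_1$ and the rest of the matching. Doing this for the at most $k-1$ edges of $M_0$ hit by $e_1$ yields a matching of size $s$ that is disjoint from $e_1$, i.e. a matching of size $s+1$, so $\nu(H) > s$ as claimed.

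The main obstacle — and the place that needs the quantitative hypothesis in full — is the \textbf{rerouting/repair step}: one must ensure that after removing the $O(\varepsilon^{1/2} n)$ atypical vertices and the boundedly many vertices committed to $e_1$ and to the edges of $M_0$ currently being repaired, the dense core $H[U \setminus (\text{bad set})]$ still admits a perfect or near-perfect matching, and that replacement edges exist at each of the $\le k-1$ repair moves. This is a finite sequence of greedy choices, each succeeding because a typical vertex has $\Omega(n^{k-1})$ edges inside $U$ while the forbidden set has size $o(n)$, hence kills only $o(n^{k-1})$ of them; the bookkeeping is routine but must be done carefully so that the total number of forbidden vertices never exceeds, say, $2k$ plus the bad set. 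A secondary point to check is parity/divisibility inside $U$: $|U| = ks+k-1 \equiv k-1 \pmod k$, so a matching in $H[U]$ can cover at most $ks$ vertices and leaves at least $k-1$ uncovered, which is exactly the slack we need to host the $k-1$ vertices of $e_1$ that fall in $U$; I would arrange $M_0$ so that its $k-1$ uncovered vertices of $U$ can absorb these, reducing the number of actual repair moves. No step requires anything beyond Theorem~3.1-style density counting and elementary greedy matching arguments, so the conclusion $\nu(H) > s$ follows.
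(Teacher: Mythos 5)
There is a genuine gap in your first step. You claim that $H[U]$ contains a matching $M_0$ of size $s$ that avoids the $O(\sqrt{\varepsilon}\,n)$ bad vertices of $U$, but this is impossible: a matching of size $s$ covers exactly $ks$ vertices, and $|U|=ks+k-1$, so such a matching must cover all of $U$ except $k-1$ vertices. If $\Omega(\sqrt{\varepsilon}\,n)\gg k-1$ vertices of $U$ have essentially no edges inside $H[U]$ (which the hypothesis permits: $\varepsilon n^{k}$ missing edges can isolate $\Theta(\varepsilon n)$ vertices of $U$ from $H[U]$, since each isolated vertex only accounts for $\binom{|U|-1}{k-1}=O(n^{k-1})$ missing edges), then every matching inside $H[U]$ misses all of them, and hence has size strictly less than $s$. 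Your ``near-perfect matching in the dense core'' argument delivers a matching of size roughly $s-\Theta(\sqrt{\varepsilon}\,n/k)$, not $s$; and then appending a single edge $e_1$ through one vertex $w_1\in W$ cannot bridge a deficit of this magnitude. The rerouting/repair step can only handle at most $k-1$ collisions between $e_1$ and $M_0$; it does nothing about the possibility that $M_0$ itself is too small.

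The paper's proof is built precisely around this obstacle and uses the \emph{full} strength of the hypothesis that there are $\varepsilon^{1/2k}n/(k+1)$ high-degree vertices outside $U$, not just one. It first greedily constructs a matching $M_1$ of $\varepsilon^{1/2k}n/(k+1)$ edges, each through a distinct such high-degree vertex (a vertex $v\in S$ that cannot be matched would have degree at most $|V(M_1)\cup S|\cdot n^{k-2}$, contradicting $d_H(v)\geq \varepsilon^{1/2k}n^{k-1}$). Then it only needs a matching $M_2$ of size $s-\varepsilon^{1/2k}n/(k+1)+1$ in $U\setminus V(M_1)$, built through $\sqrt{\varepsilon}$-good vertices $S'\subset U$. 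Because $\varepsilon^{1/2}\ll\varepsilon^{1/2k}$, the slack $\approx\varepsilon^{1/2k}n/(k+1)$ of uncovered vertices in $U$ comfortably swamps both the $O(\sqrt{\varepsilon}\,n)$ bad vertices and $V(M_1)\cap U$, and the $\sqrt{\varepsilon}$-goodness of $v\in S'$ guarantees an available edge into that slack at every greedy step. Your one-extra-edge scheme creates no such slack, so the argument cannot be repaired without essentially redoing the paper's two-stage construction.
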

\begin{proof}
Since $H$ is $\varepsilon$-close to $D^k_{n,s}(U)$, one can see that $|E(D^k_{n,s}(U))\setminus E(H)|\leq \varepsilon n^k$. Then all but at most $k\sqrt{\varepsilon}n$ vertices in $H$ are $\sqrt{\varepsilon}$-good. Otherwise,
\begin{equation*}
	\begin{split}
		|E(D^k_{n,s}(U))\setminus E(H)|&=\frac{1}{k}\sum_{v\in V(H)}|N_{D^k_{n,s}(U)}(v)\backslash N_{H}(v)|\\
		&>(k\sqrt{\varepsilon}n\cdot \sqrt{\varepsilon}n^{k-1})/k=\varepsilon n^k,
	\end{split}
\end{equation*}
a contradiction.

By assumption there are at least $\varepsilon^{\frac{1}{2k}} n/(k+1)$ vertices in $V(H)\setminus U$ with degree at least $\varepsilon^{\frac{1}{2k}} n^{k-1}$ in $H$. 
So we choose  $S\subseteq  V(H)\setminus U$ such that $|S|=\varepsilon^{\frac{1}{2k}} n/(k+1)$ and $d_{H}(v)\geq \varepsilon^{\frac{1}{2k}} n^{k-1}$ for every $v\in S$.

Let $M_1$ be a maximum matching in $H$ such that $|e\cap S|=1$ for every $e\in M_1$. We claim that $|M_1|=|S|$, otherwise, suppose that $|M_1|<|S|= \varepsilon^{\frac{1}{2k}} n/(k+1)$, then there exists a vertex $v\in S\setminus V(M_1)$ such that $N_{H}(v)\cap \binom{V(H)\setminus (V(M_1)\cup S)}{k-1}=\emptyset$. Thus $d_H(v)\leq |V(M_1)\cup S|\cdot n^{k-2}\leq k\varepsilon^{\frac{1}{2k}} n^{k-1}/(k+1)$, contradicting  the fact that $d_H(v)\geq \varepsilon^{\frac{1}{2k}} n^{k-1}$.

 Recall that all but at most $k\sqrt{\varepsilon}n$ vertices in $H$ are $\sqrt{\varepsilon}$-good. So  there exists a subset $S'$ of $\sqrt{\varepsilon}$-good vertices in $H$ such that $S'\subseteq U\setminus V(M_1)$ and $|S'|=s-\varepsilon^{\frac{1}{2k}} n/(k+1)+1$. Let $G:=H-V(M_1)$. Let $M_2$ be a maximum matching in $G$ such that $|e\cap S'|=1$ for every $e\in M_2$. We claim that $|M_2|\geq s-\varepsilon^{\frac{1}{2k}} n/(k+1)+1$. Otherwise, suppose that $|M_2|\leq s-\varepsilon^{\frac{1}{2k}} n/(k+1)$, then there exists a vertex $v\in S'\setminus V(M_2)$ such that
$$N_{G}(v)\cap \binom{U\setminus (V(M_1)\cup V(M_2)\cup S')}{k-1}=\emptyset.$$
Note that $|U\cap V(M_1)|\leq (k-1)\varepsilon^{\frac{1}{2k}} n/(k+1)$. So we have
\[
|U\setminus (V(M_1)\cup V(M_2)\cup S')|> ks-(k-1)\varepsilon^{\frac{1}{2k}} n/(k+1)-k(s-\varepsilon^{\frac{1}{2k}} n/(k+1))\geq\varepsilon^{\frac{1}{2k}}n/(k+1).
\]
 Thus
\begin{equation*}
	|N_{D^k_{n,s}(U)}(v)\setminus N_{H}(v)|\geq\binom{|U\setminus (V(M_1)\cup V(M_2)\cup S')|}{k-1}>\binom{\varepsilon^{\frac{1}{2k}}n/(k+1)}{k-1}> \sqrt{\varepsilon}n^{k-1},
\end{equation*}
contradicting the fact that $v$ is $\sqrt{\varepsilon}$-good in $H$. Then we can construct a matching $M_2$ of size $s-\varepsilon^{\frac{1}{2k}} n/(k+1)+1$ such that every edge in $M_2$ contains exactly one vertex belonging to $S'$. $M_1\cup M_2$ is a matching of size $s+1$ in $H$.
\end{proof}

When $H$ is close to $H^k_{n,s}$, we prove the following lemma.
\begin{lm}\label{close}
	
	For any given integer $k\geq 3$, there exist $\varepsilon>0$ and a positive integer $n_0$ such that the following holds. Let $H$ be a $k$-graph on $n>n_0$ vertices and $s$ be an integer with $1\leq s\leq n(1/k-2/(5k^2))$. If $H$ is $\varepsilon$-close to  $H^k_{n,s}$, $\nu(H)\leq s$ and
	\begin{equation}\label{hmedge}
		e(H)>\binom{n}{k}-\binom{n-s}{k}-\binom{n-s-k}{k-1}+1,
	\end{equation}
	then $\tau(H)\leq s$.
\end{lm}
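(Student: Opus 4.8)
The plan is to split according to the size of $s$. If $s\le n/(2k^3)$ then $n>2k^3s$, so the hypotheses $\nu(H)\le s$ and $e(H)>\binom nk-\binom{n-s}k-\binom{n-s-k}{k-1}+1$ are exactly those of Theorem~\ref{bollobas}, which already gives $\tau(H)\le s$. Hence we may assume $n/(2k^3)<s\le n(1/k-2/(5k^2))$, so that $s=\Theta(n)$ and the closeness hypothesis is genuinely informative. Fix a partition $U,W$ of $V(H)$ with $|W|=s$ and $|E(H^k_{n,s}(U,W))\setminus E(H)|\le\varepsilon n^k$; for $w\in W$ let $m_w$ be the number of $(k-1)$-sets $f\subseteq U$ with $f\cup\{w\}\notin E(H)$. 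Then $\sum_{w\in W}m_w\le\varepsilon n^k$, so by Markov's inequality at most $\sqrt\varepsilon\,n$ vertices of $W$ have $m_w>\sqrt\varepsilon\,n^{k-1}$; call these the \emph{bad} vertices $W_b$ and the others \emph{full}.

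Assume for contradiction that $\tau(H)>s$. Since $W$ is then not a cover, $H[U]$ has an edge; in particular $W_b\neq\emptyset$, for if every vertex of $W$ were full then $U$ would be independent. We first claim $\nu(H[U])\le|W_b|$. Otherwise, take a matching of size $|W_b|+1$ inside $U$ and then repeatedly pick a full vertex of $W$ and match it to some $(k-1)$-set of $U$ disjoint from everything used so far: the bound $s\le n(1/k-2/(5k^2))$ keeps $\Theta(n)$ vertices of $U$ free throughout, and fullness guarantees that after discarding the $\le\sqrt\varepsilon\,n^{k-1}$ forbidden $(k-1)$-sets at least one remains. This yields a matching of size $(|W_b|+1)+(s-|W_b|)=s+1$, contradicting $\nu(H)\le s$; so $\nu(H[U])\le|W_b|$.

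Next observe that it suffices to prove $e(H)\le\binom nk-\binom{n-s}k-\binom{n-s-k}{k-1}+1=e(HM^k_{n,s})$, which contradicts \eqref{hmedge}. Write $d$ for the number of cross-edges of $H^k_{n,s}(U,W)$ missing from $H$ (so $d\le\varepsilon n^k$); since $e(H)=e(H[U])+\binom nk-\binom{n-s}k-d$, the target is equivalent to $e(H[U])+\binom{n-s-k}{k-1}-1\le d$. The core of the argument is that edges inside $U$ force this many missing cross-edges. The basic mechanism: for an edge $e_0\subseteq U$ and a bad vertex $w^*$, if one can match the remaining $W$-vertices by disjoint cross-edges into $U\setminus e_0$ and $w^*$ still has a neighbour $(k-1)$-set inside the leftover part of $U$, then $e_0$ together with these cross-edges is a matching of size $s+1$; letting the auxiliary cross-edges vary, this forces $w^*$ to miss all $(k-1)$-subsets of $U$ avoiding $e_0$, hence $m_{w^*}\ge\binom{n-s-k}{k-1}$, with strictly more missing as $H[U]$ acquires more edges. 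When $|W_b|=1$ (so $H[U]$ is intersecting), $\tau(H)>s$ together with the Hilton--Milner theorem forces $H[U]$ to be a star unless $e(H[U])$ is of lower order, and in the star case the inequality reduces to the clean statement that a nonempty $(k-1)$-graph $\mathcal L$ on $n-s-1$ vertices with covering number at most $k-1$ satisfies $|\mathcal L|+\#\{(k-1)\text{-set transversals of }\mathcal L\}\le\binom{n-s-1}{k-1}-\binom{n-s-k}{k-1}+1$ (with equality both for a single edge, which corresponds to $HM^k_{n,s}$, and for the family of all $(k-1)$-sets through a fixed $(k-1)$-set). For $|W_b|\ge2$ one runs the analogous but heavier argument after deleting a maximum matching of $H[U]$ and analyzing why none of the bad vertices can be matched into the remainder.

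The step I expect to be the main obstacle is precisely this last aggregation when $|W_b|\ge2$: with several bad vertices the clean ``match everyone else, then $w^*$'' argument fails, because the auxiliary matching must itself match bad vertices, so one needs a deficiency/Hall-type analysis showing that the total missing cross-edge count $d$ is forced to absorb both $e(H[U])$ and the extra $\binom{n-s-k}{k-1}-1$ \emph{without losing lower-order terms} --- i.e.\ getting the constants exactly right so the bound is precisely $e(HM^k_{n,s})$. By comparison, the reduction to large $s$, the closeness setup, and the bound $\nu(H[U])\le|W_b|$ are routine.
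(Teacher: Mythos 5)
Your proposal attempts a direct edge-counting argument, but it is genuinely incomplete: you explicitly flag the $|W_b|\ge 2$ case as ``the main obstacle,'' sketch only a vague deficiency/Hall-type scheme for it, and do not resolve it. That case is exactly where the delicate constants have to be won, so this is a real gap rather than a routine detail to be filled in.

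The paper sidesteps this difficulty entirely by a cleaner reduction. Instead of classifying $W$ by missing cross-edges, it lets $T\subseteq W$ be the vertices of \emph{low absolute degree} in $H$ (at most $\binom{n-1}{k-1}-\binom{n-ks-1}{k-1}$), sets $W':=W\setminus T$ and $H':=H-W'$, and proves two claims: (i) $\nu(H')\le t:=|T|$, by the same greedy extension idea you use (any matching of size $t+1$ in $H'$ extends to one of size $s+1$ in $H$ by matching each high-degree vertex of $W'$ to a fresh cross-edge); and (ii) $t\le(n-s+t)/(2k^3)$, via $\varepsilon$-closeness. Then it computes $e(H')>\binom{n'}{k}-\binom{n'-t}{k}-\binom{n'-t-k}{k-1}+1$ with $n'=n-s+t$, and applies Theorem~\ref{bollobas} \emph{to $H'$}, which is legitimate precisely because (ii) restores the $n'>2k^3t$ hypothesis. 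This yields $\tau(H')\le t$, hence $\tau(H)\le t+|W'|=s$. The point you miss is that Theorem~\ref{bollobas} can be reused on a shrunken instance for \emph{all} $s$ in the allowed range, not just $s\le n/(2k^3)$; this avoids ever having to prove the sharp inequality $e(H[U])+\binom{n-s-k}{k-1}-1\le d$ directly. By contrast, your approach must re-derive the Hilton--Milner-type extremal bound from scratch inside $U$, which is exactly the content of Theorem~\ref{bollobas} that the paper invokes as a black box. If you want to pursue your route, the $|W_b|=1$ case via Hilton--Milner is plausible, but for $|W_b|\ge2$ you would essentially be reproving a stability version of Bollob\'as--Daykin--Erd\H os rather than using it, and the lower-order-term bookkeeping you flag as the obstacle is genuinely hard to get exactly right.
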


\begin{proof}
	Since $H$ is $\varepsilon$-close to $H^k_{n,s}$, there is a partition $U,W$ of $V(H)$ such that $|U|=n-s$, $|W|=s$ and $|E(H^k_{n,s}(U,W)\backslash E(H)|\leq\varepsilon n^k$. Let $T\subset W$ be the set of vertices which are contained in at most $\binom{n-1}{k-1}-\binom{n-ks-1}{k-1}$ edges of $H$ and let $t=|T|$. Let $W':=W\setminus T$ and let $H':=H-W'$.
	\begin{cla}\label{claim1}
		$\nu(H')\leq t$.
	\end{cla}
	
	Suppose that $\nu(H')> t$. Let $M_1$ be a   matching of size $t+1$ in $H'$. Let $\{v_1,\ldots,v_{s-t}\}:=W'$.
Next we  greedily construct a matching $M_2$ of size $s-t$ in
$H-V(M_1)$ such that $|e\cap W'|=1$ for all $e\in M_2$.
 For $v_1\in W'$, note that
 $d_H(v_1)>\binom{n-1}{k-1}-\binom{n-ks-1}{k-1}\geq \binom{n-1}{k-1}-\binom{n-k(t+1)-(s-t-1)-1}{k-1}$,
so there exists an edge $e_1\in E(H-V(M_1))$ such that
$e_1\cap W'=\{v_1\}$.
Now suppose we have found a matching
$\{e_1,e_2,...,e_r\}$ in $H-V(M_1)$  such that
$|e_i\cap W'|=\{v_i\}$ for all $i\in [r]$.
If $r=s-t$, then $M_1\cup \{e_1,\ldots, e_{s-t}\}$ is a desired matching.
So we may assume that $r<s-t$.
Write $G_r:=H-V(M_1)-(\cup_{i=1}^r e_i)$. Note that $|[n]\setminus (W'\cup V(M_1)\cup(\cup_{i=1}^r e_i))|>  0.4n/k-k$.
Since $d_H(v_{r+1})>\binom{n-1}{k-1}-\binom{n-ks-1}{k-1}\geq \binom{n-1}{k-1}-\binom{n-k(t+r+1)-(s-t-r-1)-1}{k-1}$,  there exists an edge $e_{r+1}\in E(G_r)$ such that
$e_{r+1}\cap W'=\{v_{r+1}\}$. Continuing the process, we may find the desired matching $M_2=\{e_1,\ldots,e_{s-t}\}$.
 Now $M_1\cup M_2$ is a matching of size $s+1$ in $H$, a contradiction.

%
	
	\begin{cla}\label{claim2}
		$t\leq(n-s+t)/2k^3$.
	\end{cla}
	
	Since $s\leq n(1/k-2/(5k^2))$, each vertex $v\in T$ is contained in at most
	\begin{equation}\label{2}
		\binom{n-1}{k-1}-\binom{n-ks-1}{k-1}\leq \left(1-\left(\frac{1}{5k}\right)^{k-1}\right)\binom{n-1}{k-1}
	\end{equation}
	edges. Thus,
	\begin{equation}\label{1}
		|E(H^k_{n,s}(U,W))\backslash E(H)|\geq \frac{1}{k}\sum_{v\in T}|N_{H^k_{n,s}(U,W)}(v)\setminus N_{H}(v)|\geq\frac{t}{k}\left(\frac{1}{5k}\right)^{k-1}\binom{n-1}{k-1}.
	\end{equation}
	Furthermore, since $H$ is $\varepsilon$-close to $H^k_{n,s}(U,W)$, we have
	\begin{equation*}
		|E(H^k_{n,s}(U,W))\backslash E(H)|\leq \varepsilon n^k.
	\end{equation*}
	Compared with inequality (\ref{1}), we have
$t\leq \frac{5^{k-1}k^k\varepsilon n^k}{\binom{n-1}{k-1}}$.
Since $\varepsilon$ is small enough and $n$ is sufficiently large, then $t\leq n/3k^3\leq(n-s+t)/2k^3 $. This completes the proof of Claim \ref{claim2}.

	By inequality (\ref{hmedge}), we have
	\begin{equation*}
		\begin{split}
			e(H')&\geq e(H)-\left(\binom{n}{k}-\binom{n-s+t}{k}\right)\\
			&>\binom{n}{k}-\binom{n-s}{k}-\binom{n-s-k}{k-1}+1-\binom{n}{k}+\binom{n-s+t}{k}\\
			&=\binom{n-s+t}{k}-\binom{(n-s+t)-t}{k}-\binom{(n-s+t)-t-k}{k-1}+1.
		\end{split}
	\end{equation*}
	
	Now Theorem \ref{bollobas} implies that $\tau(H')\leq t$. Consequently, one can see that $\tau(H)\leq s$.
\end{proof}


\begin{lm}\label{closecase}
	There exist $\varepsilon>0$ and a positive integer $n_0$ such that the following holds. Let $H$ be a $3$-graph on $n>n_0$ vertices and $s$ be an integer with $n/54\leq s\leq 13n/45$. If $H$ is $\varepsilon$-close to $H^3_{n,s}$ or $D^3_{n,s}$, $\nu(H)\leq s$  and $\tau(H)>s$, then
	\begin{equation}
		e(H)\leq\max\left\{\binom{n}{3}-\binom{n-s}{3}-\binom{n-s-3}{2}+1,\binom{3s+2}{3}\right\}.
	\end{equation}
\end{lm}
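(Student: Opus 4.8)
The plan is to split according to which of the two extremal configurations $H$ is $\varepsilon$-close to, treating the $H^{3}_{n,s}$ side by a direct appeal to Lemma~\ref{close} and the $D^{3}_{n,s}$ side by combining Lemma~\ref{closecli2} with the exact bounds of Theorems~\ref{erdos3} and~\ref{closeclique}. Throughout, $\varepsilon$ is chosen sufficiently small relative to the constant $\varepsilon_{1}$ produced by Theorem~\ref{closeclique} (and to the slacks below), and $n$ is large. Suppose first that $H$ is $\varepsilon$-close to $H^{3}_{n,s}$. The hypothesis $s\le 13n/45=n\big(\tfrac13-\tfrac{2}{5\cdot 3^{2}}\big)$ is precisely what Lemma~\ref{close} requires for $k=3$. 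If we had $e(H)>\binom n3-\binom{n-s}{3}-\binom{n-s-3}{2}+1$, that lemma would force $\tau(H)\le s$, contradicting $\tau(H)>s$; hence $e(H)\le\binom n3-\binom{n-s}{3}-\binom{n-s-3}{2}+1$, at most the claimed maximum.

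Now suppose $H$ is $\varepsilon$-close to $D^{3}_{n,s}$. Fix a $(3s+2)$-set $U$ with $\big|\binom U3\setminus E(H)\big|\le\varepsilon n^{3}$ and set $W:=V(H)\setminus U$. Since $n/54=n/(2\cdot 3^{3})\le s\le 13n/45\le(1-\tfrac{2}{15})n/3$, Lemma~\ref{closecli2} applies (with $c=2/15$); as $\nu(H)\le s$, fewer than $\varepsilon^{1/6}n/4$ vertices of $W$ have $H$-degree at least $\varepsilon^{1/6}n^{2}$. Bounding the number of edges of $H$ meeting $W$ by $\sum_{v\in W}d_{H}(v)$ and separating off the few high-degree vertices of $W$,
\[
e(H)\ \le\ e(H[U])+\sum_{v\in W}d_{H}(v)\ \le\ \binom{3s+2}{3}+\frac{\varepsilon^{1/6}n}{4}\binom{n-1}{2}+n\cdot\varepsilon^{1/6}n^{2}\ \le\ \binom{3s+2}{3}+\tfrac{9}{8}\,\varepsilon^{1/6}n^{3}.
\]
Let $s^{*}$ solve $\binom{3s^{*}+2}{3}=\binom n3-\binom{n-s^{*}}{3}$; a short computation gives $s^{*}=\tfrac{\sqrt{321}-3}{52}\,n+O(1)$, with $5n/18<s^{*}<13n/45$. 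If $s\ge s^{*}$, then $\binom{3s+2}{3}\ge\binom n3-\binom{n-s}{3}$ and Theorem~\ref{erdos3} already gives $e(H)\le\binom{3s+2}{3}$. If $s\le s^{*}-C\varepsilon^{1/6}n$ for a suitable absolute constant $C$, then (since the derivative of $\binom n3-\binom{n-s}{3}-\binom{3s+2}{3}$ has order $-n^{2}$ near $s^{*}$) one has $\binom n3-\binom{n-s}{3}-\binom{3s+2}{3}\ge\tfrac98\varepsilon^{1/6}n^{3}+\binom{n-s-3}{2}$ and also $\binom n3-\binom{n-s}{3}-\binom{n-s-3}{2}+1\ge\binom{3s+2}{3}$, so the displayed bound gives $e(H)\le\binom n3-\binom{n-s}{3}-\binom{n-s-3}{2}+1$, as required.

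There remains the critical window $s^{*}-C\varepsilon^{1/6}n<s<s^{*}$, where $5n/18\le s\le (n-2)/3$, so Theorem~\ref{closeclique} is available with its constant $\varepsilon_{1}$; it suffices to exhibit a complete subgraph of $H$ on at least $(1-\varepsilon_{1})\cdot 3s$ vertices, for then $e(H)\le\binom{3s+2}{3}$. Apply the shifting of Section~2: by Lemma~\ref{shiftprop} it preserves $e(H)$ and does not increase $\nu(H)$, so we may assume $H$ is stable, and (this is the point discussed below) that $H$ is still $\varepsilon'$-close to $D^{3}_{n,s}([3s+2])$ for some $\varepsilon'$ depending only on $\varepsilon$; in particular $H$ misses only $O(\varepsilon^{1/6}n^{3})$ of the $\binom{3s+2}{3}$ triples inside $[3s+2]$. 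For stable $H$ one has $\nu(H)\le s$ iff $\{3s+1,3s+2,3s+3\}\notin E(H)$, and since $E(H)$ is a down-set, whenever $\{j-2,j-1,j\}\notin E(H)$ every triple $\{a,b,c\}$ with $a\ge j-2$ is also a non-edge. Hence the least $j$ with $\{j-2,j-1,j\}\notin E(H)$ must satisfy $j\ge(1-\varepsilon_{1})(3s+2)$ — otherwise the triples with smallest vertex at least $j-2$ already exceed the $O(\varepsilon^{1/6}n^{3})$ budget of missing triples in $[3s+2]$ — and then $\{1,\dots,j-1\}$ is a complete subgraph of $H$ of size at least $(1-\varepsilon_{1})\cdot 3s$. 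Theorem~\ref{closeclique} now gives $e(H)\le\binom{3s+2}{3}$, finishing the proof.

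The first three regimes are bookkeeping over the quoted results; the technical heart is the critical window, and within it the assertion that after shifting the bulk of the edges still lies inside a fixed $(3s+2)$-set, i.e.\ that passing to the stable graph does not leak edge mass to large coordinates. Establishing this is a shifted version of the stability of the Erd\H os Matching extremal configuration: a stable $3$-graph with $\nu\le s$ whose edge count is within $o(n^{3})$ of $\binom{3s+2}{3}$ (and, as guaranteed here by the displayed bound, strictly below the $H^{3}_{n,s}$-value by a definite amount) must have all but $o(n^{3})$ of its edges inside $[3s+2]$. Making this precise and quantifying the error so that it stays comfortably below the $\varepsilon_{1}$-threshold of Theorem~\ref{closeclique} is the crux; everything else is routine.
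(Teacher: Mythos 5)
Your proof is correct and uses the same key ingredients as the paper's: Lemma~\ref{close} disposes of the $H^{3}_{n,s}$-close case, and for the $D^{3}_{n,s}$-close case one combines Lemma~\ref{closecli2} (to bound the edge mass outside the putative clique), the numerical gap between $\binom{n}{3}-\binom{n-s}{3}-\binom{n-s-3}{2}+1$ and $\binom{3s+2}{3}$, and shifting plus Theorem~\ref{closeclique}. The paper simply splits the $D^{3}_{n,s}$ range at $s=5n/18$ (shifting-and-clique above, gap below, matching exactly the hypothesis of Theorem~\ref{closeclique}), whereas you split at the crossover $s^{*}$ with a narrower ``critical window'' and an extra appeal to Theorem~\ref{erdos3} for $s\ge s^{*}$; this is valid but is only a reorganization of the same argument.
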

\begin{proof}
By Lemma \ref{close}, we may assume that $H$ is $\varepsilon$-close to $D^3_{n,s}$. So there exists a subset $U\subseteq V(H)$ of size $3s+2$ such that $|E(D^3_{n,s}(U))\setminus E(H)|\leq \varepsilon n^3$.

For $5n/18< s\leq 13n/45$, let $U=[3s+2]$ and $V(H)\setminus U=[n]\setminus [3s+2]$. Iterating the $(i,j)$-shift for all $1\leq i<j\leq n$ will eventually produce a $3$-graph $H'$ which is invariant with respect to all $(i,j)$-shifts. By Lemma \ref{shiftprop}, we have $e(H')=e(H)$ and $\nu(H')\leq \nu(H)\leq s$. By the definition of $(i,j)$-shift, $|E(D^3_{n,s}(U))\setminus E(H')|\leq \varepsilon n^3$. We claim that there is a complete subgraph of size at least $3(1-3\varepsilon^{\frac{1}{3}})s$ in $H'$. Let $U'=[3(1-3\varepsilon^{\frac{1}{3}})s]$. Suppose that $H'[U']$ is not a complete subgraph, then $U\setminus U'$ is an independent set in $H'$ since $H'$ is stable. Thus $|E(D^3_{n,s}(U))\setminus E(H')|\geq \binom{|U\setminus U'|}{3}>\binom{5\varepsilon^{\frac{1}{3}}n/2}{3}> \varepsilon n^3$ for sufficiently large $n$, which is a contradiction. So by Theorem \ref{closeclique}, $e(H)= e(H')\leq \binom{3s+2}{3}$.

For $n/54\leq s\leq 5n/18$. Let $s=\alpha n$, then $1/54\leq \alpha \leq 5/18$. Let $f(x)=\frac{1-(1-x)^3}{6}-\frac{9x^3}{2}$, we have
	\begin{equation}\label{diff}
	\begin{split}
		\binom{n}{3}-\binom{n-s}{3}-\binom{n-s-3}{2}+1-\binom{3s+2}{3}&=\frac{(1-(1-\alpha)^3)n^3}{6}-\frac{9\alpha^3n^3}{2}+o(n^{3})\\&=f(\alpha)n^3+o(n^3).
	\end{split}
\end{equation}
Since $f'(x)=\frac{1-2x-26x^2}{2}$ is decreasing in $[1/54,5/18]$ with $f'(1/54)>0$ and $f'(5/18)<0$, we have $f(\alpha)\geq \min\{f(1/54),f(5/18)\}=f(5/18)>0.007$ for $1/54\leq \alpha \leq 5/18$.

Since $\nu(H)\leq s$, Lemma \ref{closecli2} implies that there are at most $\varepsilon^{1/6}n/4$ vertices with degree at least $\varepsilon^{1/6}n^2$. Thus the number of edges intersecting $V(H)\setminus U$ is no more than
\[
\varepsilon^{1/6}n^3/4+\varepsilon^{1/6}n^3= 5\varepsilon^{1/6}n^3/4.
\]
Thus we have
$$e(H)\leq \binom{3s+2}{3}+5\varepsilon^{1/6}n^3/4.$$
Compared with inequality (\ref{diff}), we have
$$e(H)\leq \binom{n}{3}-\binom{n-s}{3}-\binom{n-s-3}{2}+1$$
for sufficiently small $\varepsilon$.
\end{proof}

\section{non-close case}

A \emph{fractional matching} of a hypergraph $H$ is a function $f:E(H)\rightarrow [0,1]$ such that for each $v\in V(H)$, $\sum_{v\in e}f(e)\leq 1$. A fractional matching is called {\it fractional perfect matching} if $\sum_{e\in E}f(e)=|V(H)|/k$, or equivalently, $\sum_{v\in e}f(e)=1$ for all $v\in V(H)$. Let
\begin{equation*}
	\nu^*(H)=\max\left\{\sum_{e\in E}f(e):f \mbox{ is a fractional matching of } H\right\}.
\end{equation*}
A \emph{fractional vertex cover} of a hypergraph $H$ is a function $w:V(H)\rightarrow [0,1]$ such that for each $e\in E$, $\sum_{v\in e}w(v)\geq 1$. Let
\begin{equation*}
	\tau^*(H)=\min\left\{\sum_{v\in V}w(v):w \mbox{ is a fractional vertex cover of } H\right\}.
\end{equation*}
Then the strong duality theorem of linear programming gives
\begin{equation*}
	\nu^*(H)=\tau^*(H).
\end{equation*}

In this section we study the case that $H$ is close to neither $H^3_{n,s}$ nor $D^3_{n,s}$. In order to complete the proof of Theorem \ref{main} for $n/54\leq s\leq 13n/45$, it is sufficient for us to prove the following lemma.
\begin{lm}\label{nonclose}
For every $0<\varepsilon\ll 1$ there exists a positive integer $n_0$ such that the following holds. Let $n,s$ be integers with $n>n_0$ and $n/54\leq s\leq 13n/45$. Let $H$ be a $3$-graph on vertex set $[n]$. If $H$ is $\varepsilon$-close to neither $H^3_{n,s}$ nor $D^3_{n,s}$ and
\begin{equation}\label{nonclose-ineq}
e(H)>\max\left\{\binom{n}{3}-\binom{n-s}{3}-\binom{n-s-3}{2}+1,\binom{3s+2}{3}\right\},
\end{equation}
then $\nu(H)> s$.

\end{lm}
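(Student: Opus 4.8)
\textbf{Proof plan for Lemma \ref{nonclose}.}

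The plan is to argue by contradiction: assume $\nu(H)\le s$ and that $H$ is $\varepsilon$-close to neither $H^3_{n,s}$ nor $D^3_{n,s}$, yet $e(H)$ exceeds the bound in \eqref{nonclose-ineq}, and derive that $H$ actually contains a matching of size $s+1$. The overall strategy follows the three-step scheme advertised in the introduction. First I would pass to a ``reduced'' auxiliary $3$-graph $H'$ on a vertex set of size roughly $3(s+1)$: the point of the Erd\H{o}s Matching Conjecture regime $n/54\le s\le 13n/45$ is that $3s$ is a constant fraction of $n$, so a matching of size $s+1$ in $H$ corresponds to an \emph{almost} perfect matching in a suitable host graph, and one wants to set things up so that ``$H$ has a matching of size $s$'' is equivalent to ``$H'$ has an almost perfect matching''. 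Concretely, I would take a maximum matching $M$ in $H$, suppose $|M|\le s$, and analyze the link structure on $V(H)\setminus V(M)$; the edge count hypothesis forces many vertices outside $V(M)$ to have large degree, and I would use these to absorb/reroute edges of $M$.

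The heart of the argument is the fractional-to-integral passage. The key quantity is $\nu^*(H')$, the fractional matching number: if $H'$ has a fractional perfect matching (or $\nu^*$ close to $|V(H')|/3$), then, because $H'$ lives on $\Theta(n)$ vertices, one can hope to round it to an integral matching covering all but $o(n)$ vertices, and then a short greedy/absorption argument upgrades this to the extra edge needed. To get the fractional perfect matching I would invoke the Han--Kohayakawa--Person \cite{HKP18} / Han \cite{Han19} approach: find many \emph{edge-disjoint} fractional perfect matchings in $H'$, which is exactly what makes the rounding via the round-randomization method of \cite{AFHRRS} work without wrecking later stages. The ``not $\varepsilon$-close to $H^3_{n,s}$ or $D^3_{n,s}$'' hypothesis is precisely what is needed here: by stability versions of Theorem \ref{erdos3} (equivalently, the structure of near-extremal $3$-graphs with no large matching), a $3$-graph with $e(H)$ near the extremal value but with $\nu^*$ too small must be close to one of the two extremal configurations $H^3_{n,s}$, $D^3_{n,s}$. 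So assuming the contrary gives $\tau^*(H')=\nu^*(H')$ large, which feeds the LP-duality step.

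The steps in order: (1) set up $H'$ and prove the equivalence ``$\nu(H)\ge s+1 \iff H'$ has an almost perfect matching'', tracking the edge-count loss $e(H')\ge e(H)-O(n^2)$; (2) show $\nu^*(H')$ is essentially $|V(H')|/3$, using the non-closeness hypothesis together with the edge bound \eqref{nonclose-ineq} and a stability analysis of near-extremal configurations; (3) extract $\Omega(1)$ (or $n^{\Omega(1)}$) edge-disjoint fractional perfect matchings via the Han et al.\ machinery; (4) apply round-randomization to produce an integral matching missing only $o(n)$ vertices; (5) finish by a greedy argument, using the large-degree vertices outside the current matching, to reach size $s+1$. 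The main obstacle I expect is step (2): quantifying ``not close to either extremal graph $\Rightarrow$ $\nu^*$ is large'' needs a careful stability statement, since a priori a graph could have huge $e(H)$, small $\nu^*$, and yet differ from both $H^3_{n,s}$ and $D^3_{n,s}$ on $\varepsilon n^3$ edges only through a clever perturbation; ruling this out rigorously — presumably by a fractional/shifting analysis showing any such $H'$ has a dense ``cover-free'' core forcing one of the two structures — is the crux. A secondary technical nuisance is making the rounding in step (4) compatible with the boundary-of-range values of $s$ (near $n/54$ and near $13n/45$), where the constants in the absorption are tightest.
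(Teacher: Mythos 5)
Your high-level plan matches the three-step scheme the paper sketches, but two of the concrete steps are off, and the crucial trick that makes the "fractional perfect matching exists" step go through is missing.

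First, the auxiliary graph. You propose a \emph{reduced} $H'$ on roughly $3(s+1)$ vertices obtained by fixing a maximum matching $M$ and analyzing links on $V(H)\setminus V(M)$. The paper does the opposite: it \emph{augments} $H$ with a set $Q$ of $r$ ghost vertices, $r$ chosen so that $n+r\equiv 0\pmod 3$ and $n-3s-2\eta n\le 2r\le n-3s-\eta n$, and takes $H_r^3$ consisting of $E(H)$ together with every triple meeting $Q$. This gives the clean equivalence you want for free: a matching of size $>s+r$ in $H_r^3$ loses at most $r$ edges upon deleting $Q$, so $\nu(H_r^3)>s+r$ forces $\nu(H)>s$. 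With the ghosts, an almost-perfect matching in $H_r^3$ covering all but $\sigma n$ vertices automatically has size $>s+r$, so there is no ``step (5) greedy upgrade'' to perform. Your reduction-to-a-clique idea doesn't obviously give either the equivalence or the leeway for the final $o(n)$-loss, and the vague ``absorb/reroute edges of $M$'' is doing a lot of undone work.

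Second, the fractional matchings. The paper does not produce edge-disjoint fractional perfect matchings; it iteratively builds $n^{0.2}$ of them, $f_1,\dots,f_{n^{0.2}}$, with the weaker but exactly right property $\sum_{\{x,y\}\subseteq e}\sum_i f_i(e)<2$ for every pair $\{x,y\}$. This is done by, at each step, deleting all edges through pairs whose accumulated weight has hit $1$ (only $O(tn)$ vertices become ``blocked'' per vertex, so the degree loss is $O(n^{1.2})$), re-running the existence lemma, and extending by zero. The bounded pair-weight, not edge-disjointness, is what gives $\mathbb{E}\,d_{H'}(\{x,y\})\le 1$ and hence $\Delta_2\le n^{0.1}$ after the Chernoff step, which is what Frankl--R\"odl (Theorem \ref{deg}) needs.

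Third, and most importantly, you correctly flag step (2) — proving $\nu^*$ is large from ``not close to $H^3_{n,s}$ or $D^3_{n,s}$'' — as the crux, but you don't supply the mechanism. The paper's Lemma \ref{frac} handles it by an LP-duality/shifting trick: take a minimum fractional vertex cover $\omega$, sort the vertices so $\omega$ is nonincreasing, and define $H'$ by $E(H')=\{e:\sum_{v\in e}\omega(v)\ge 1\}$. Then $H'\supseteq H$, has the same $\tau^*$ (hence the same $\nu^*$), and is \emph{stable}. Now the stability theorem for stable $3$-graphs (Theorem \ref{shift}), combined with the fact that $H[[n]]$ is far from both extremals, forces $\nu(H'[[n]])\ge s+\eta n$; after pulling out that matching, the ghost-vertex neighborhoods make the rest vertex-by-vertex close to $H^3_{m,t}$, and Lemma \ref{vexclose} finishes off an integral perfect matching in $H'$ — so $\nu^*(H)=\nu^*(H')=|V|/3$. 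Without this ``dualize and shift'' step, your outline remains a plan with the key lemma unproven.
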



For stable $3$-graphs, the proof of Lemma 7 in \cite{LM} and Lemma 4.2 in \cite{GLMY} imply the following theorem.
\begin{thm}[\L uczak and Mieczkowska \cite{LM}; Gao et.al., \cite{GLMY}]\label{shift}
	Let $\varepsilon,\rho$ be two reals such that $0<\rho\ll\varepsilon<1$. Let $n,s$ be two positive integers such that $n$ is sufficiently large and $1\leq s\leq (n-2)/3$. Let $H$ be a $3$-graph with $V(H)=[n]$ such that $H$ is stable. If
	\begin{equation*}
		e(H)>\max\left\{\binom{n}{3}-\binom{n-s}{3},\binom{3s+2}{3}\right\}-\rho n^3
	\end{equation*}
	and $\nu(H)\leq s$, then $H$ is $\varepsilon$-close to $H^3_{n,s}([n]\setminus[s],[s])$ or $D^3_{n,s}([3s+2])$.
\end{thm}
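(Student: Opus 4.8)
The plan is to reproduce the stable‑$3$‑graph analysis underlying the \L uczak--Mieczkowska proof of Theorem \ref{erdos3} in \cite{LM} (see also \cite[Lemma 4.2]{GLMY}), carrying the additive error $\rho n^3$ through every estimate so as to recover the stability conclusion and not merely the exact bound. We may and do assume throughout that $\rho$ is much smaller than $\varepsilon$ (in particular smaller than the constants $c_0$ and $\varepsilon/2$ occurring below), and that $n$ is large.

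\medskip
\noindent\textbf{Step 1: the shifting restriction.} Since $H$ is stable with $\nu(H)\le s$, every edge $\{v_1<v_2<v_3\}$ of $H$ satisfies $v_i\le i(s+1)-1$ for some $i\in\{1,2,3\}$. Indeed, otherwise $v_i\ge i(s+1)$ for $i=1,2,3$; put $f_j=\{\,j+1,\ (s+1)+(j+1),\ 2(s+1)+(j+1)\,\}$ for $j=0,1,\dots,s$. For each fixed $i$ the $i$‑th entries of $f_0,\dots,f_s$ run over the block $[(i-1)(s+1)+1,\ i(s+1)]$, and distinct blocks are disjoint, so the $f_j$ are pairwise disjoint; also $f_j$ is dominated coordinatewise by $(v_1,v_2,v_3)$, hence $f_j\in E(H)$ by stability. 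Thus $\{f_0,\dots,f_s\}$ is a matching of size $s+1$, contradicting $\nu(H)\le s$. Consequently
$$E(H)\subseteq A_1\cup A_2\cup A_3,\qquad A_1=\{e:e\cap[s]\neq\emptyset\},\ \ A_2=\{e:|e\cap[2s+1]|\ge 2\},\ \ A_3=\{e:e\subseteq[3s+2]\},$$
where $A_1=E\big(H^3_{n,s}([n]\setminus[s],[s])\big)$, $A_3=E\big(D^3_{n,s}([3s+2])\big)$, and $\nu(A_1)=\nu(A_2)=\nu(A_3)=s$.

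\medskip
\noindent\textbf{Step 2: trivial ranges, and the reduction.} If $s\le\varepsilon n$ then $|A_1|=\binom n3-\binom{n-s}3\le s\binom{n-1}2<\varepsilon n^3$, so $|A_1\setminus E(H)|\le|A_1|<\varepsilon n^3$ and $H$ is $\varepsilon$‑close to $H^3_{n,s}([n]\setminus[s],[s])$. If $3s+2\ge(1-\varepsilon/2)n$ then at most $(n-3s-2)\binom{n-1}2<(\varepsilon/2)n^3$ triples are not contained in $[3s+2]$, so, using $e(H)\ge|A_3|-\rho n^3$,
$$|A_3\setminus E(H)|=|A_3|-|E(H)\cap A_3|\le|A_3|-e(H)+(\varepsilon/2)n^3\le\rho n^3+(\varepsilon/2)n^3<\varepsilon n^3,$$
and $H$ is $\varepsilon$‑close to $D^3_{n,s}([3s+2])$. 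It remains to treat $\varepsilon n<s<(1-\varepsilon/2)n/3$, where $\alpha:=s/n$ is bounded away from $0$ and $\tfrac13$. On this range $|A_1|=\tfrac{3\alpha-3\alpha^2+\alpha^3}{6}n^3+o(n^3)$, $|A_2|=\big(2\alpha^2-\tfrac83\alpha^3\big)n^3+o(n^3)$, $|A_3|=\tfrac92\alpha^3 n^3+o(n^3)$, and a short computation with these cubics gives $|A_2|\le\max\{|A_1|,|A_3|\}-c_0 n^3$ for an absolute constant $c_0>0$ (the smallest gap occurs near the crossover $\alpha^\ast$ determined by $|A_1|=|A_3|$, i.e.\ $26\alpha^2+3\alpha-3=0$).

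\medskip
\noindent\textbf{Step 3: the core, and the main obstacle.} It remains to prove the statement in the range $\varepsilon n<s<(1-\varepsilon/2)n/3$, which is \cite[Lemma 7]{LM} / \cite[Lemma 4.2]{GLMY}. Suppose, for a contradiction, that $|A_1\setminus E(H)|>\varepsilon n^3$ and $|A_3\setminus E(H)|>\varepsilon n^3$. Since $e(H)\ge\max\{|A_1|,|A_3|\}-\rho n^3\ge|A_1|-\rho n^3$ and $E(H)\subseteq A_1\cup A_2\cup A_3$, the first inequality yields $|E(H)\setminus A_1|=e(H)-|E(H)\cap A_1|\ge(|A_1|-\rho n^3)-(|A_1|-\varepsilon n^3)=(\varepsilon-\rho)n^3$, i.e.\ $H$ has more than $(\varepsilon-\rho)n^3$ edges avoiding $[s]$; symmetrically $H$ has more than $(\varepsilon-\rho)n^3$ edges containing a vertex outside $[3s+2]$. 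The remaining task --- the heart of the argument --- is to show these two statements are incompatible with $\nu(H)\le s$ together with $e(H)\ge\max\{|A_1|,|A_3|\}-\rho n^3$. This is the \L uczak--Mieczkowska analysis carried with slack: decompose $E(H)$ by \emph{type} (an edge has type $i$ when $i$ is least with $v_i\le i(s+1)-1$, so type $1$ is exactly $A_1$), write $e(H)=e_1+e_2+e_3$, and bound this sum by a weighted count over random configurations of $[n]$ formed from one distinguished $s$‑set together with a near‑partition of the remaining vertices into triples. The constraint $\nu(H)\le s$ --- applied to matchings that combine type‑$1$ edges with type‑$2$ or type‑$3$ edges, and using that, by stability, the two positive densities above force $H$ to contain sizeable ``down‑complete'' subfamilies that simultaneously avoid $[s]$ and reach past $3s+2$ --- bounds $e_1+e_2+e_3$ strictly below $\max\{|A_1|,|A_3|\}-\rho n^3$ in this situation (the ``intermediate'' type $2$ being killed by the inequality $|A_2|\le\max\{|A_1|,|A_3|\}-c_0n^3$ of Step 2 together with $\rho<c_0$), which is the desired contradiction. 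The difficult point throughout is exactly this: converting the \emph{tight} matching constraint $\nu(H)\le s$ into a robust near‑extremal structure statement with the correct error term, the delicate regime being $\alpha$ near the crossover $\alpha^\ast\approx0.287$, where $|A_1|\approx|A_3|$ and the margin over genuine mixtures of the two extremal behaviours is smallest --- still $\Theta(n^3)$, which is why $\rho\ll\varepsilon$ suffices.
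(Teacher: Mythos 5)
Your Steps 1 and 2 are correct: the standard shifting argument showing that a stable $3$-graph with $\nu(H)\le s$ has $E(H)\subseteq A_1\cup A_2\cup A_3$, and the reductions for $s\le\varepsilon n$ and $3s+2\ge(1-\varepsilon/2)n$, are sound (a small caveat: your constant $c_0$ in the bound $|A_2|\le\max\{|A_1|,|A_3|\}-c_0n^3$ is not absolute, since the gap tends to $0$ as $s/n\to0$; on your restricted range it is of order $\varepsilon$, which still beats $\rho\ll\varepsilon$, so this is harmless). The problem is Step 3. There you correctly identify what remains to be shown --- that a stable $H$ with $\nu(H)\le s$, with more than $(\varepsilon-\rho)n^3$ edges avoiding $[s]$ and more than $(\varepsilon-\rho)n^3$ edges meeting $[n]\setminus[3s+2]$, cannot satisfy $e(H)>\max\{|A_1|,|A_3|\}-\rho n^3$ --- but you do not prove it; you only describe, in qualitative terms, the \L uczak--Mieczkowska counting over random configurations ``carried with slack'' and assert that it yields the contradiction. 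That missing step is precisely the content of Lemma 7 of \cite{LM} and Lemma 4.2 of \cite{GLMY}, i.e.\ it is the entire substance of Theorem \ref{shift} beyond the easy ranges. Note in particular that your inequality $|A_2|\le\max\{|A_1|,|A_3|\}-c_0n^3$ does not by itself ``kill'' the intermediate type: since $E(H)$ may mix edges of all three types, one must rule out hybrid families (e.g.\ many $A_1$-type edges together with many $A_3$-type edges, which is exactly the delicate regime near the crossover $\alpha^*\approx0.287$), and this requires the actual weighted-configuration estimate, not just a comparison of the three extremal sizes.

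For comparison, the paper does not re-prove this statement either: it imports it, observing that the proofs of Lemma 7 in \cite{LM} and Lemma 4.2 in \cite{GLMY} give exactly this stability version for stable $3$-graphs. So your write-up amounts to a correct reduction of Theorem \ref{shift} to those cited lemmas, plus a plausible but unexecuted plan for re-deriving them; as a self-contained proof it has a genuine gap at its core.
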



\begin{lm}[K\"uhn, Osthus and Treglown \cite{K13}]\label{vexclose}
	Let $0<\theta<10^{-6}$ and let $n,s$ be two positive integers such that $n/150\leq s\leq n/3$. Let $H$ be a $3$-graph on $n$ vertices and $U,W$ be a partition of $V(H)$ such that $|W|=s$. If every vertex of $H$ is $\theta$-good with respect to $H^3_{n,s}(U,W)$. Then $\nu(H)\geq s$.
\end{lm}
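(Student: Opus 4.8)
The plan is to build a matching of size exactly $s$ that covers $W$: for every $w\in W$ we will choose a pair $P_w\in\binom{U}{2}$ so that all the triples $\{w\}\cup P_w$ are edges of $H$ and the $P_w$ are pairwise disjoint; such a collection is a matching of size $|W|=s$ (the edges of $H$ lying inside $U$, if any, will not be needed). For $w\in W$ every triple of $[n]$ through $w$ lies in $H^3_{n,s}(U,W)$, so the link of $w$ there is the complete graph on $[n]\setminus\{w\}$; hence $\theta$-goodness of $w$ says that its set of \emph{forbidden pairs} $B_w:=\{P\in\binom{U}{2}:\{w\}\cup P\notin E(H)\}$ has size at most $\theta n^2$, and summing, $\sum_{w\in W}|B_w|\le s\theta n^2$.

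First I would try the greedy approach. Order $W=\{w_1,\dots,w_s\}$ and pick $P_{w_1},P_{w_2},\dots$ one at a time. Once disjoint $P_{w_1},\dots,P_{w_r}$ have been chosen inside $U$, the set of unused vertices of $U$ has size $(n-s)-2r$ and spans $\binom{(n-s)-2r}{2}$ pairs, of which at most $|B_{w_{r+1}}|\le\theta n^2$ are forbidden for $w_{r+1}$; so the process continues as long as $\binom{(n-s)-2r}{2}>\theta n^2$. Since the left side is smallest at $r=s-1$, this holds for all $r\le s-1$ whenever $n-3s\ge C\sqrt{\theta}\,n$ for a suitable absolute constant $C$, and then the greedy argument alone proves the lemma. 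This already settles every $s$ bounded away from $n/3$ — in particular every value of $s$ that actually occurs elsewhere in this paper.

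It remains to handle the essentially tight window $n/3-C\sqrt\theta\,n\le s\le n/3$, where the greedy has no room to spare, and this is the crux. Here I would argue globally: pick $2s$ vertices of $U$ (possible since $|U|=n-s\ge 2s$) and look for a perfect matching $\mathcal M$ of them into $s$ pairs together with a bijection $\phi\colon\mathcal M\to W$ with $P\notin B_{\phi(P)}$ for all $P\in\mathcal M$. Applying Hall's theorem to the bipartite graph on $(\mathcal M,W)$ with $P\sim w$ iff $P\notin B_w$, such a $\phi$ exists as soon as $|\{P\in\mathcal M:b(P)\ge t\}|\le s-t$ holds for every $t\ge 1$, where $b(P):=|\{w\in W:P\in B_w\}|$. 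From $\sum_P b(P)\le s\theta n^2$ one gets that only $O(s\theta n^2/t)$ pairs have $b(P)\ge t$, which makes the Hall inequality easy for all but the smallest values of $t$ (for a uniformly random $\mathcal M$ the count $|\mathcal M\cap\{b(P)\ge t\}|$ has small expectation and concentrates well, so one can union-bound over the moderate thresholds). The delicate point is the regime of small $t$ — notably $t=1$, which demands that $\mathcal M$ meet some pair lying in \emph{no} $B_w$ at all even though $\bigcup_{w\in W}B_w$ may exhaust $\binom{U}{2}$: there $\mathcal M$ cannot be chosen in one shot, and I would instead start from an arbitrary $\mathcal M_0$ with a partial assignment and iteratively swap edges along alternating paths, using that each individual $B_w$ occupies only a $\Theta(\theta)$-fraction of $\binom{U}{2}$ (so a $2$-for-$2$ edge swap can always free a pair for the currently unmatched colour without destroying the assignment of the others), with a potential-function argument for termination; this is where the smallness of $\theta$, rather than merely $\theta<\tfrac12$, is used. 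Once the rainbow perfect matching is found, $\{\{\phi(P)\}\cup P:P\in\mathcal M\}$ is the desired matching of size $s$.

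The main obstacle is thus the near-extremal range $s\approx n/3$: the greedy dies exactly at $n=3s$, forcing a Hall-type/augmentation argument on the residual almost-complete link graphs, and of that argument the only genuinely tricky part is the small-defect end of Hall's condition, which has to be verified by an iterative swapping procedure rather than a single random choice. Every $s$ comfortably below $n/3$ — hence everything needed in the rest of the paper — follows from the elementary greedy step in the second paragraph.
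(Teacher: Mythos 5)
Your greedy step for the non-tight range is correct, and you have correctly located the crux; but precisely there your argument has a genuine gap. Everything you use, both in the greedy phase and in the Hall/swapping sketch, is the single piece of information $|B_w|\le\theta n^2$ for $w\in W$, i.e.\ the $\theta$-goodness of the vertices of $W$ only. That information is provably insufficient when $s$ is close to $n/3$. Take $n=3s$, fix $u^*\in U$, and let $H$ consist of all triples meeting $W$ except every triple $\{w,u^*,u\}$ with $w\in W$, $u\in U\setminus\{u^*\}$. Then each $B_w$ is a star of size $2s-1\le\theta n^2$, so every $w\in W$ (and every $u\in U\setminus\{u^*\}$) is $\theta$-good, yet $\nu(H)<s$: since all edges meet $W$ and $|W|=s$, a matching of size $s$ would have to consist of $s$ edges each containing exactly one vertex of $W$ and two of $U$, and $u^*$ lies in no such edge. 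The hypothesis of the lemma excludes this configuration only through the goodness of $u^*$ itself, so in the tight window any proof must use the goodness of the $U$-vertices, which your argument never invokes. Concretely, your assertion that ``a 2-for-2 edge swap can always free a pair for the currently unmatched colour'' is unjustified and fails in situations compatible with $|B_w|\le\theta n^2$: it may happen that every pair meeting either of the two leftover $U$-vertices is forbidden for the leftover colour, or that the leftover pair is forbidden for every colour, and then no bounded amount of swapping inside $U$ can succeed unless one brings in the $U$-side goodness (as in the $u^*$ example, where no swap of any size helps). So the part you leave as a sketch is not a routine completion; it is exactly where the missing hypothesis has to be used.

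Two further points. Your claim that the tight window ``never actually occurs elsewhere in this paper'' is false: in the proof of Lemma \ref{frac} the lemma is applied to $H''$ with the partition $(U_2,W_2)$ where $|U_2|=2|W_2|$, i.e.\ with $s$ equal to exactly $|V(H'')|/3$, so the near-perfect-matching case is precisely the one the paper needs (the other application, to $G'$, is in the comfortable range and is indeed covered by your greedy step). Also, for comparison: the paper does not prove this statement at all but imports it from K\"uhn, Osthus and Treglown \cite{K13}, where the corresponding extremal-case argument is a genuine piece of work exploiting the goodness of all vertices, not only those in $W$.
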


In order to prove Lemma \ref{nonclose}, we need the following lemma.
\begin{lm}\label{frac}
Let $\theta,\eta,\rho,\varepsilon$ be reals such that $0<\theta\ll\eta\ll\rho\ll\varepsilon\ll 1$.  Let $n,s$ be two positive integers such that $n$ is sufficiently large and $n/54\leq s\leq 13n/45$. Let $r$ be a positive integer such that $2r\geq n-3(s+\eta n)$ and $n+r\equiv 0 \pmod 3$. Let $H$ be a $3$-graph on vertex set $[n+r]$. If $H[[n]]$ is not $\varepsilon$-close to  $H^3_{n,s}$ or $D^3_{n,s}$,   $e(H[[n]])\geq\max\{\binom{n}{3}-\binom{n-s}{3},\binom{3s+2}{3}\}-\rho n^3$ and every vertex in $V(H)$ is $\theta$-good with respect to $H^3_{n+r,r}([n],[n+r]\setminus [n])$, then $H$ has a fractional perfect  matching.
\end{lm}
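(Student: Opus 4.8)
The plan is to construct an explicit fractional perfect matching of $H$ by combining three ingredients: (a) a near-perfect fractional matching of the "core" $H[[n]]$ coming from the Erd\H{o}s-Matching-type structure, (b) the $r$ extra vertices $R:=[n+r]\setminus[n]$, each of which is $\theta$-good with respect to $H^3_{n+r,r}([n],R)$ and hence forms edges with almost every pair from $[n]$, and (c) a weighting that ties the two together so that every vertex gets total weight exactly $1$. First I would observe that since $H[[n]]$ is \emph{not} $\varepsilon$-close to $H^3_{n,s}$ or $D^3_{n,s}$, yet $e(H[[n]])\geq\max\{\binom{n}{3}-\binom{n-s}{3},\binom{3s+2}{3}\}-\rho n^3$, the stability statement of Theorem \ref{shift} (applied after iterating all $(i,j)$-shifts, which by Lemma \ref{shiftprop} preserves the edge count and cannot increase the matching number) forces $\nu$ of the shifted graph — and hence, via a short argument, $\nu^*(H[[n]])$ — to be larger than $s$; more precisely I expect to extract $\nu^*(H[[n]])\geq s+\eta' n$ for some $\eta'$ with $\eta\ll\eta'\ll\rho$. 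Indeed, if $\nu^*(H[[n]])\leq s+\eta' n$ then by LP duality there is a fractional vertex cover of weight $\leq s+\eta' n$, and a standard shifting/weight-rounding argument (the kind underlying Theorem \ref{shift}) would push $H[[n]]$ to be $\varepsilon$-close to one of the two extremal families, a contradiction.

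Next I would build the fractional perfect matching. Let $f_0$ be a fractional matching of $H[[n]]$ of value $\geq s+\eta' n$; by scaling slightly and discarding a $\theta$-negligible amount of weight I can assume $f_0$ is supported on edges inside $[n]$ and satisfies $\sum_{e\ni v}f_0(e)\leq 1$ for all $v\in[n]$. The vertices of $[n]$ that are "saturated" (total $f_0$-weight close to $1$) number at least roughly $3(s+\eta' n)$; call the set of vertices with residual capacity $1-\sum_{e\ni v}f_0(e)$ the deficiency set, whose total residual capacity is $n-3\sum_e f_0(e)\leq n-3s-3\eta' n$. Now I use the $r$ extra vertices: because $2r\geq n-3(s+\eta n)$ and $\eta\ll\eta'$, we have $2r$ at least as large as the total residual capacity of $[n]$ plus the capacity $r$ that $R$ itself must absorb — this is exactly the inequality that makes a fractional matching using edges of the form $\{x,y,z\}$ with $z\in R$, $x,y\in[n]$ feasible. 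Since every $z\in R$ is $\theta$-good with respect to $H^3_{n+r,r}([n],R)$, the link of $z$ in $H$ contains all but $\theta n^2$ pairs from $\binom{[n]}{2}$, so I can spread a uniform-ish fractional weight over these edges to (i) bring every vertex of $R$ up to total weight $1$ and (ii) bring every vertex of $[n]$ with residual capacity up to total weight $1$. A clean way to do this is to set up the corresponding transportation LP on the bipartite-like structure between $R$ and the deficiency set of $[n]$, check Hall-type feasibility via the degree/goodness bounds, and invoke LP duality ($\nu^*=\tau^*$) to get the fractional solution; the $\theta n^2$ missing pairs per vertex of $R$ are absorbed by the slack $\eta' n \gg \eta n \gg \theta^{1/2} n$.

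The main obstacle is the second step: verifying that the residual capacities of $[n]$ and of $R$ can be \emph{simultaneously} saturated using only genuine edges of $H$. The arithmetic condition $2r\geq n-3(s+\eta n)$ is tight exactly for this, so the argument has to be careful that the deficiency on the $[n]$-side is not more than about $2r$ and that the edges used actually exist — this is where $\theta$-goodness of every vertex of $R$ (not just most of them) is essential, and where I must be sure that after removing the $\theta n^2$ bad pairs per $R$-vertex there is still enough room. I would handle this by phrasing it as feasibility of an explicit LP: variables $g(\{x,y,z\})$ for $z\in R$ and $\{x,y\}$ in the link of $z$, with the constraints $\sum g = 1-\sum_{e\ni v}f_0(e)$ for each $v$ and $\sum g \leq 1$ for each $z\in R$ (with the residual total summed to $(n+r)/3 - \sum f_0$, which is an integer by $n+r\equiv 0\bmod 3$), then check the dual has no cheap cover using the minimum-degree bound $\geq n^2-\theta n^2$ on each $z\in R$. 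Once feasibility is established, $f_0+g$ is the desired fractional perfect matching of $H$, completing the proof.
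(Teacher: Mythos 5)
Your proposal takes a genuinely different route from the paper: you aim to build a fractional perfect matching of $H$ directly by extending a large fractional matching $f_0$ of $H[[n]]$, whereas the paper builds the auxiliary $3$-graph $H'$ from a minimum fractional vertex cover (all triples of total weight $\geq 1$), observes that $H'[[n]]$ is \emph{stable}, finds an \emph{integral} perfect matching in $H'$, and then passes back to $H$ via LP duality. This difference matters, because stability is what makes the key structural step go through, and your argument leaves that step unproved.

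The main gap is your claim that $\nu^*(H[[n]])\geq s+\eta' n$ follows from ``a standard shifting/weight-rounding argument.'' Theorem~\ref{shift} applies only to \emph{stable} $3$-graphs, and $H[[n]]$ is not stable. The paper's remedy is precisely the auxiliary graph $G:=H'[[n]]$, which is stable by construction and satisfies $\nu^*(G)=\nu^*(H[[n]])$; Theorem~\ref{shift} then shows that either $\nu(G)\geq s'$ or $G$ is $\beta$-close to $H^3_{n,s'}$ or $D^3_{n,s'}$. But the closeness of $G$ does \emph{not} follow from the closeness of $H[[n]]$, nor conversely: $G$ contains $H[[n]]$ and could well be $\beta$-close to an extremal graph even though $H[[n]]$ is not $\varepsilon$-close to one. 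Ruling this out is the real content of Claim~1 in the paper, and it requires the separate arguments using $\sqrt{\beta}$-good vertices, the independence-number bound coming from non-closeness of $H[[n]]$, Lemma~\ref{vexclose}, and Lemma~\ref{closecli2}. Your proposal replaces all of this with a single sentence, so it does not actually establish the inequality.

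There is a secondary gap in the extension step. First, the arithmetic forces you to use a fractional matching on $[n]$ of value \emph{exactly} $(n-2r)/3$, not ``$\geq s+\eta' n$'': with only edges of the form $\{x,y,z\}$, $z\in R$, $x,y\in[n]$, saturating $R$ contributes exactly $2r$ to the residual on $[n]$, so if $f_0$ has larger value the counts do not balance. Second, you invoke only the $\theta$-goodness of vertices of $R$ to justify the transportation-LP feasibility, but the $\theta n^2$ missing pairs in the link of a $z\in R$ can in principle all contain one particular vertex $v\in[n]$ carrying a large residual; the goodness of the vertices of $[n]$ (also guaranteed by hypothesis) is needed to rule this out, and the Hall-type feasibility argument you sketch still has to be carried out. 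The paper avoids this by removing an integral matching $M$ of size $s+\eta n$ from $H'$, arguing that in $H'':=H'-V(M)$ every vertex is $\sqrt{\theta}$-good with respect to an appropriate $H^3_{|V(H'')|,|W_2|}(U_2,W_2)$, and then invoking Lemma~\ref{vexclose} (K\"uhn--Osthus--Treglown) as a black box to get a perfect matching of $H''$; this is cleaner and uses goodness on both sides.
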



\begin{proof}
	Let $\rho',\beta$ be two constants such that $\rho\ll\rho'\ll\beta\ll\varepsilon$.  Let $\omega:V(H)\rightarrow [0,1]$ be a minimum fractional vertex cover of  $H$. Rename the vertices in $[n]$ such that $\omega(1)\geq \cdots\geq \omega(n)$. Let $H'$ be a 3-graph with vertex set $V(H)$ and edge set $E(H')$, where
	\begin{align*}
		E(H'):=\Big\{e\in {V(H)\choose 3}\ |\ \sum_{v\in e}\omega(v)\geq 1\Big\}.
	\end{align*}
	One can see that $\omega$ is also a fractional vertex cover of $H'$ and $H$ is a subgraph of $H'$. Thus the size of minimum fractional cover of $H$ is no more than that of $H'$. So $\omega$ is also a minimum fractional vertex cover of $H'$. By Linear Programming Duality Theory, we have
	$\nu^*(H)=\tau^*(H)=\tau^*(H')=\nu^*(H')$. Thus it is sufficient for us to show that $H'$ has a fractional perfect matching. Next we will show  $H'$ has a perfect matching.

	By the definition of $H'$, $H'[[n]]$ is stable. Let $G:=H'[[n]]$ and $s':=s+\eta n$.

	\medskip
	\textbf{Claim 1.} $\nu(G)\geq s'$.
	
	Note that $e(G)\geq e(H[[n]])$. Thus
	\begin{equation}\label{5}
		\begin{split}
			e(G)>&\max\left\{\binom{n}{3}-\binom{n-s}{3},\binom{3s+2}{3}\right\}-\rho n^3\\
			\geq & \max\left\{\binom{n}{3}-\binom{n-s'}{3},\binom{3s'+2}{3}\right\}-\rho'n^3.
		\end{split}
	\end{equation}
If $G$ is not $\beta$-close to $D^3_{n,s'}$ or $H^3_{n,s'}$, by Theorem \ref{shift} and inequality (\ref{5}), we have $\nu(G)\geq s'$ since $G$ is stable. So we may assume that $G$ is $\beta$-close
$D^3_{n,s'}$ or $H^3_{n,s'}$.
 %
	
	Firstly, we consider that $G$ is $\beta$-close to $H^3_{n,s'}([n]\setminus[s'],[s'])$.
	Then all but at most $3\sqrt{\beta}n$ vertices in $G$ are $\sqrt{\beta}$-good. Otherwise,
	\begin{equation*}
		\begin{split}
			|E(H^3_{n,s'}([n]\setminus[s'],[s']))\backslash E(G)|&=\frac{1}{3}\sum_{v\in V(H)}|N_{H^3_{n,s'}([n]\setminus[s'],[s'])}(v)\backslash N_{G}(v)|\\
			&>(3\sqrt{\beta}n\cdot \sqrt{\beta}n^2)/3=\beta n^3,
		\end{split}
	\end{equation*}
a contradiction.

	Let $U:=[n]\setminus[s']$ and $W:=[s']$. Let $V^{bad}$ be the set of $\sqrt{\beta}$-bad vertices in $V(G)$. So $|V^{bad}|\leq3\sqrt{\beta}n$. Write $W^{bad}:=V^{bad}\cap W$ and $U^{bad}:=V^{bad}\setminus W^{bad}$. Let $b:=|W^{bad}|$ and $a:=|U^{bad}|$.
	Since $H[[n]]$ is not $\varepsilon$-close to $H^3_{n,s}$, we have $\alpha(H[[n]])< n-s-\varepsilon n/2 $.
	Otherwise, there exists an independent set $I_1$ of size $n-s-\varepsilon n/2 $. Let $W'$ be a subset of $[n]\setminus I_1$ such that $|W'|=s$. Then
	\begin{equation*}
		\begin{split}
			&|E(H^3_{n,s}([n]\setminus W',W'))\setminus E(H)|\\
			<&\binom{n}{3}-\binom{n-s}{3}-\left(\binom{n}{3}-\binom{n-s}{3}-\rho n^3-\varepsilon n^3/2 \right)\\
			<&\varepsilon n^3,
		\end{split}
	\end{equation*}
	contradicting that  $H[[n]]$ is not $\varepsilon$-close to $H^3_{n,s}$. Since $G$ is stable, we may assume that $I:=\{n-\alpha(G)+1,\ldots,n\}$ is a maximum independent set in $G$.  Let $S:=U\setminus I$.
	Since $\alpha(G)\leq \alpha(H[[n]])< n-s-\varepsilon n/2$, we have $|S|\geq\varepsilon n/3$. For any three distinct vertices $x_1,x_2,x_3\in S$, one can see that $\{x_1,x_2,x_3\}\in E(G)$. Otherwise $\{x_3\}\cup I$ is an independent set since $G$ is stable,
	contradicting that $I$ is a maximum independent set. Since $|S|\geq\varepsilon n/3\geq9\sqrt{\beta}n\geq 3b$, there exists a matching $M_1$ of size $b$ in $G[S]$. Let $G':=G-V(M_1)-V^{bad}$ and $U_1:=U\setminus (V(M_1)\cup U^{bad})$, $W_1:=W\backslash  W^{bad}$. Recall that every vertex in $U_1\cup W_1$ is $\sqrt{\beta}$-good in $G$ with respect to $H^3_{n,s'}(U,W)$. Thus for $x\in V(G')$,
	\begin{equation*}
			|N_{H^3_{|V(G')|,|W_1|}(U_1,W_1)}(x)\setminus N_{G'}(x)|\leq|N_{H^3_{n,s'}(U,W)}(x)\backslash N_{G}(x)|
			\leq \sqrt{\beta}n^2\leq2\sqrt{\beta}|V(G')|^2.
	\end{equation*}
	So every vertex in $V(G')$ is $2\sqrt{\beta}$-good with respect to $H^3_{|V(G)'|,|W_1|}(U_1,W_1)$.
	By Lemma \ref{vexclose}, $G'$ has a matching $M_2$ of size $s'-b$. Then $M_1\cup M_2$ is a matching of size $s'$ in $G$.

Secondly, consider that  $G$ is $\beta$-close to $D^3_{n,s'}([3s'+2])$.
	Let $U'$ be a subset of $[3s'+2]$ such that $|U'|=3s+2$.
	Since $H[[n]]$ is not $\varepsilon$-close to $D^3_{n,s}$, there are at least $\varepsilon n/3$ vertices in $[n]\setminus U'$ with degree at least $\varepsilon n^2/2$ in $H[[n]]$. Otherwise, the number of edges intersecting $[n]\setminus U'$ is at most
	$$\frac{1}{2}\varepsilon n^2 \cdot n+\frac{1}{3}\varepsilon n\cdot {n\choose 2}\leq\frac{2}{3}\varepsilon n^3;$$
	and so we have
	\begin{equation*}
		|E(D^3_{n,s}(U'))\setminus E(H[[n]])|<\binom{3s+2}{3}-\left(e(H)-\frac{2}{3}\varepsilon n^3\right)<\varepsilon n^3,
	\end{equation*}
	a contradiction. Since $E(H[[n]])\subseteq E(G)$, there are at least $\varepsilon n/3-|[3s'+2]\setminus U'|$ vertices in $[n]\setminus [3s'+2]$ with degree at least $\varepsilon n^2/2$ in $G$. Since $\varepsilon n/3-|[3s'+2]\setminus U'|\geq\beta^{1/6} n/4$ and $\varepsilon n^2/2\geq \beta^{1/6} n^2$, by Lemma \ref{closecli2}, we have $\nu(G)>s'$. This completes the proof of Claim 1.

	Let $M$ be a matching of size $s+\eta n$ in $G$. Since $n+r\equiv 0 \pmod 3$, we have $n+r-|V(M)|\equiv 0 \pmod 3$. Since $2r\geq n-3\eta n-3s>n/15$, there is a subset $W_2\subseteq \{n+1,\ldots,n+r\}$ of size $(n+r-|V(M)|)/3>n/45+r/3$. Let $H'':=H'-V(M)$ and $U_2:=V(H)\setminus(W_2\cup V(M))$. Note that $|U_2|=2|W_2|$ and $|V(H'')|=|U_2|+|W_2|\geq n/15+r$. Since every vertex in $V(H)$ is $\theta$-good with respect to $H^3_{n+r,r}([n],[n+r]\setminus [n])$, one can see that for every $x\in V(H'')$,
	\begin{equation*}
		\begin{split}
			|N_{H^3_{|V(H'')|,|W_2|}(U_2,W_2)}(x)\setminus N_{H''}(x)|&\leq|N_{H^3_{n+r,r}([n],[n+r]\setminus [n])}(x)\backslash N_{H}(x)| \\
			&\leq \theta (n+r)^2\leq\sqrt{\theta}|V(H'')|^2.
		\end{split}
	\end{equation*}
That is, every vertex in $V(H'')$ is $\sqrt{\theta}$-good with respect to $H^3_{|V(H'')|,|W_2|}(U_2,W_2)$. By Lemma \ref{vexclose}, there is a matching $M'$ of size $|W_2|$ in $H''$. Note that
\[
|M\cup M'|=s+\eta n+(n+r-|V(M)|)/3=s+\eta n+(n+r-3s-3\eta n)/3=(n+r)/3.
\]
 $M\cup M'$ is a perfect matching in $H'$. So $\nu^*(H')=\nu(H')=(n+r)/3$. By linear programming duality theorem, $H$ has a fractional perfect  matching. This completes the proof.
\end{proof}

For a given $k$-graph $H$, let $V'$ be a set of $r$ vertices such that $V(H)\cap V'=\emptyset$.
Define $H_r^k$ to be a $k$-graph with vertex set $V(H)\cup V'$ and edge set
$$E(H_r^k)=E(H)\cup \left\{e\in {V(H)\cup V'\choose k}\ :\ e\cap V'\neq \emptyset\right\}.$$

To prove Lemma \ref{nonclose}, we will find an almost perfect matching of size at least $r+s+1$ in $H_r^3$ which implies that there exists a matching of size $s+1$ in $H$. To find such an almost perfect matching, we will find
an almost regular subgraph of $H$ with bounded maximum $2$-degree  by using  Lemma \ref{frac} and round randomization method.  Then we apply  the following theorem of Frankl and R\"{o}dl \cite{F85}. For any positive integer $l$, we use $\Delta_l(H)$ to denote the maximum $l$-degree of a hypergraph $H$.

\begin{thm}[Frankl and R\"{o}dl \cite{F85}]\label{deg}
	For every integer $k\geq 2$ and a real number $\varepsilon >0$, there exists $\tau=\tau(k,\varepsilon)$, $d_0=d_0(k,\varepsilon)$ such that for every $n\geq D\geq d_0$ the following holds: Every $k$-graph $H$ on $n$ vertices with $(1-\tau)D< d_H(v)<(1+\tau)D$ for all $v\in V(H)$ and $\Delta_2(H)<\tau D$ contains a matching covering all but at most $\varepsilon n$ vertices.
\end{thm}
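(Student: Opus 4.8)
The plan is to prove Theorem~\ref{deg} by the semi-random, or R\"odl nibble, method: the matching is built in a bounded number of rounds, and in each round one adds a small random batch of pairwise-disjoint edges and deletes every vertex it covers.

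First I would fix a small ``bite size'' $\gamma = \gamma(k,\varepsilon)$ and a round count $t = t(k,\varepsilon)$ with $(1-\gamma e^{-k\gamma})^{t} < \varepsilon/2$, and only then choose $\tau$ small and $d_0$ large relative to $\gamma$ and $t$. Set $H_0 := H$, $V_0 := V(H)$, $D_0 := D$, $\tau_0 := \tau$. The induction hypothesis, for each round $i \le t$, is that the hypergraph $H_i$ on the set $V_i$ of still-uncovered vertices is almost regular, $(1-\tau_i)D_i < d_{H_i}(v) < (1+\tau_i)D_i$ for all $v \in V_i$, and has small codegree, $\Delta_2(H_i) < \tau_i D_i$, where $D_{i+1} \approx (1-\gamma e^{-k\gamma})^{k-1}D_i$ and $\tau_{i+1}$ is a controlled inflation of $\tau_i$ given by an explicit recursion. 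Because $t$ is bounded and $\tau$ was chosen tiny, $\tau_t$ stays far below any fixed threshold, and because $D$ drops by only a constant factor per round, $D_t$ remains at least a large constant multiple of $d_0'$, the threshold needed for the concentration step below.

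The core of a single round is as follows. Include each edge of $H_i$ independently with probability $p_i := \gamma/D_i$, forming a random edge set $R_i$, and let $M_i$ be the set of edges of $R_i$ that meet no other edge of $R_i$; then $M_i$ is a matching, and $H_{i+1}$ is $H_i$ with $V(M_i)$ deleted. A first-moment computation, using that the number of edges of $H_i$ meeting a fixed edge $e$ is $(k + O(k^2\tau_i))D_i$ (by inclusion--exclusion and the codegree bound), shows that $e$ lands in $M_i$ with probability $(1+o(1))\,p_i e^{-k\gamma}$; hence each vertex is covered in round $i$ with probability $(1+o(1))\,\gamma e^{-k\gamma}$, and for a surviving vertex $v$ the conditional expectation of $d_{H_{i+1}}(v)$ is $(1+o(1))(1-\gamma e^{-k\gamma})^{k-1}D_i = D_{i+1}$, the hidden $o(1)$ terms depending only on $\tau_i$ and $1/D_i$. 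To promote these expectations to statements that hold simultaneously at every vertex --- and at every pair of vertices, which yields $\Delta_2(H_{i+1}) < \tau_{i+1}D_{i+1}$ --- I would apply a bounded-differences (Azuma) inequality or Janson/Chernoff-type tail bounds to the relevant sums, using $\Delta_2(H_i) < \tau_i D_i$ to control the effect of a single edge's inclusion on each count, and then union-bound over the at most $n$ vertices and $\binom{n}{2}$ pairs; this is the step that forces $D_i \ge d_0$.

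Iterating $t$ rounds, the expected number of uncovered vertices is at most $(1+o(1))(1-\gamma e^{-k\gamma})^{t}n < \varepsilon n$, and (again by concentration) the same bound holds with positive probability, so $M_1 \cup \cdots \cup M_t$ is a matching leaving at most $\varepsilon n$ vertices uncovered. The main obstacle is the parameter bookkeeping: one must set up the recursions for $D_i$ and $\tau_i$ so that the multiplicative errors incurred at each of the $t$ concentration steps, accumulated geometrically, are absorbed by the initial choices $\tau \ll \varepsilon$ and $D \ge d_0 \gg 1$, and so that the $o(1)$ error terms above remain negligible all the way down to round $t$, where $D_t$ is smallest. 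Once it is guaranteed that $D_t \ge d_0'(k,\varepsilon)$ and $\tau_t$ is still tiny, the induction closes and Theorem~\ref{deg} follows.
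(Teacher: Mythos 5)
Theorem~\ref{deg} is quoted by the paper from Frankl and R\"odl \cite{F85}; the paper contains no proof of it, so your proposal can only be compared with the standard semi-random (``nibble'') argument, and in outline you have chosen the right method. There is, however, a genuine gap in your concentration step. You propose to maintain, as the induction hypothesis, that \emph{every} vertex of $H_i$ has degree $(1\pm\tau_i)D_i$, and to restore this after each bite by applying Azuma/Chernoff to $d_{H_{i+1}}(v)$ and union bounding over the $n$ vertices and $\binom{n}{2}$ pairs. But the theorem only assumes $D\ge d_0(k,\varepsilon)$, a constant independent of $n$, so $D_i$ stays bounded throughout; the best tail bound for the degree of a fixed vertex is then of the form $e^{-c\tau_i^2D_i}$, a constant rather than $o(1/n)$, and the union bound collapses. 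Worse, the statement you want is simply false in this regime: since vertices far apart behave essentially independently, with high probability a small but positive proportion of the surviving vertices will have degree deviating from $D_{i+1}$ by more than any fixed $\tau_{i+1}$, so the all-vertex regularity hypothesis cannot be re-established after even one round. Your remark that ``this is the step that forces $D_i\ge d_0$'' would only be accurate if $d_0$ were allowed to grow like $\log n$, which the statement does not permit (it suffices for the application in this paper, where $D\approx n^{0.2}$, but not for the theorem as stated).

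The standard repair, as in Pippenger--Spencer and the treatment of the nibble in \cite{AS}, is to weaken the induction hypothesis: require the degree bounds only for all but a small proportion of the surviving vertices, bound the \emph{expected} number of vertices violating them after a bite (a per-vertex second-moment or Chernoff estimate followed by linearity and Markov, no union bound), and discard or ignore the exceptional vertices, charging them to the final $\varepsilon n$ uncovered vertices; one must then also check that a small exceptional set does not spoil the first-moment calculations of the next round. Note too that no concentration is needed for the codegree condition: $\Delta_2$ is monotone under vertex deletion, so $\Delta_2(H_{i+1})\le\Delta_2(H_i)<\tau_iD_i$, and the passage to $\tau_{i+1}D_{i+1}$ is purely deterministic bookkeeping. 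With the induction reformulated in this ``all but a few vertices'' form your sketch becomes the standard proof; as written, the induction cannot be closed.
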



The following well-known  Chernoff bounds can be found  in \cite{J00} (see Theorem 2.8, inequalities (2.9) and (2.11)). We denote by $Bi(n,p)$ a binomial random variable with parameters $n$ and $p$.

\begin{lm}[Chernoff inequality for small deviation]\label{bound}
	If $X=\sum_{i=1}^nX_i$, each random variable $X_i$ has Bernoulli distribution with expectation $p_i$, and $\alpha\leq3/2$, then
	\begin{equation}\label{chers}
		\mathbb{P}(|X-\mathbb{E}X|\geq\alpha\mathbb{E}X)\leq2e^{-\frac{\alpha^2}{3}\mathbb{E}X}.
	\end{equation}
	In particular, when $X\thicksim Bi(n,p)$ and $\lambda <\frac{3}{2}np$, then
	\begin{equation*}
		\mathbb{P}(|X-np|\geq\lambda)\leq e^{-\Omega(\lambda^2/(np))}.
	\end{equation*}
\end{lm}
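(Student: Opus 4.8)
The plan is to establish the two one-sided tail bounds by the exponential-moment (Chernoff) method and then combine them by a union bound; throughout write $\mu:=\mathbb{E}X=\sum_{i=1}^n p_i$, and note that we may assume $0<\alpha\leq 3/2$, since $\alpha=0$ makes (\ref{chers}) trivial. For the upper tail I would fix $t>0$, apply Markov's inequality to $e^{tX}$, and use independence of the $X_i$ together with $1+x\leq e^x$:
\[
\mathbb{P}\bigl(X\geq(1+\alpha)\mu\bigr)\leq e^{-t(1+\alpha)\mu}\prod_{i=1}^n\bigl(1+p_i(e^t-1)\bigr)\leq e^{-t(1+\alpha)\mu}\,e^{\mu(e^t-1)}.
\]
Minimizing the exponent over $t>0$ gives $t=\ln(1+\alpha)$ and hence $\mathbb{P}(X\geq(1+\alpha)\mu)\leq\exp\bigl(\mu(\alpha-(1+\alpha)\ln(1+\alpha))\bigr)$. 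For the lower tail I would run the symmetric argument with $e^{-tX}$, $t>0$ (using $e^{-tX}\geq e^{-t(1-\alpha)\mu}$ on the event $\{X\leq(1-\alpha)\mu\}$); if $\alpha\geq 1$ this tail is vacuous because $X\geq 0$, and for $0<\alpha<1$ the same computation with $\mathbb{E}[e^{-tX}]\leq e^{\mu(e^{-t}-1)}$, optimized at $t=-\ln(1-\alpha)$, yields $\mathbb{P}(X\leq(1-\alpha)\mu)\leq\exp\bigl(\mu(-\alpha-(1-\alpha)\ln(1-\alpha))\bigr)$.

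It then remains to convert these bounds into the stated exponential form, which reduces to two elementary inequalities: $(1+\alpha)\ln(1+\alpha)-\alpha\geq\alpha^2/3$ for $0\leq\alpha\leq 3/2$, and $(1-\alpha)\ln(1-\alpha)+\alpha\geq\alpha^2/2$ for $0\leq\alpha<1$. The second follows by differentiating twice: the function and its first derivative vanish at $0$, and its second derivative is $\alpha/(1-\alpha)\geq 0$, so it is nonnegative. For the first, I would set $g(\alpha):=(1+\alpha)\ln(1+\alpha)-\alpha-\tfrac{\alpha^2}{3}$, observe $g(0)=g'(0)=0$ with $g'(\alpha)=\ln(1+\alpha)-\tfrac{2\alpha}{3}$, and note that $g''(\alpha)=\tfrac{1}{1+\alpha}-\tfrac23$ is positive on $[0,\tfrac12)$ and negative on $(\tfrac12,\tfrac32]$; hence $g'$ first increases from $0$ and then decreases (with $g'(\tfrac32)=\ln\tfrac52-1<0$), so $g$ is unimodal on $[0,\tfrac32]$ and its minimum there is attained at an endpoint. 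Since $g(0)=0$ and $g(\tfrac32)=\tfrac52\ln\tfrac52-\tfrac94>0$, we get $g\geq 0$ throughout. Substituting back, $\mathbb{P}(X\geq(1+\alpha)\mu)\leq e^{-\alpha^2\mu/3}$ and $\mathbb{P}(X\leq(1-\alpha)\mu)\leq e^{-\alpha^2\mu/2}\leq e^{-\alpha^2\mu/3}$, so a union bound over these two disjoint events gives $\mathbb{P}(|X-\mu|\geq\alpha\mu)\leq 2e^{-\alpha^2\mu/3}$, which is (\ref{chers}).

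Finally, for the ``in particular'' assertion I would specialize to $X\sim Bi(n,p)$, so $\mu=np$, and put $\alpha:=\lambda/(np)$; then the hypothesis $\lambda<\tfrac32 np$ is exactly $\alpha<\tfrac32$, we have $\alpha\mu=\lambda$, and (\ref{chers}) gives $\mathbb{P}(|X-np|\geq\lambda)\leq 2e^{-\lambda^2/(3np)}=e^{-\Omega(\lambda^2/(np))}$. The only genuinely non-routine step — and the reason the hypothesis $\alpha\leq 3/2$ is imposed — is the calculus estimate $(1+\alpha)\ln(1+\alpha)-\alpha\geq\alpha^2/3$, which fails for large $\alpha$; everything else is the standard Chernoff/Bernstein computation, so I expect this monotonicity argument for $g$ to be the main (and essentially only) obstacle.
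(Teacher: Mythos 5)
Your proof is correct: the exponential-moment computation, the optimization at $t=\ln(1+\alpha)$ (resp.\ $t=-\ln(1-\alpha)$), and the two calculus inequalities $(1+\alpha)\ln(1+\alpha)-\alpha\geq\alpha^2/3$ on $[0,3/2]$ and $(1-\alpha)\ln(1-\alpha)+\alpha\geq\alpha^2/2$ on $[0,1)$ all check out, and the endpoint verification $\tfrac52\ln\tfrac52>\tfrac94$ is exactly what makes the constant $1/3$ and the restriction $\alpha\leq 3/2$ work. The paper itself offers no proof of this lemma --- it is quoted from Janson, \L uczak and Ruci\'nski (Theorem 2.8, inequalities (2.9) and (2.11)) --- and your argument is essentially the standard textbook derivation of those bounds, so there is nothing in the paper to diverge from. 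One cosmetic point: at $\alpha=1$ the lower-tail event $\{X\leq(1-\alpha)\mu\}=\{X=0\}$ is not literally vacuous, but it has probability $\prod_i(1-p_i)\leq e^{-\mu}\leq e^{-\alpha^2\mu/3}$, so the claimed bound still holds; also note that independence of the $X_i$, which you rightly use, is implicit in the lemma's statement.
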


\begin{lm}[Chernoff inequality for large deviation]
	If $X=\sum_{i=1}^nX_i$, each random variable $X_i$ has Bernoulli distribution with expectation $p_i$, and $x\geq7\mathbb{E}X$, then
	\begin{equation}\label{cherl}
		\mathbb{P}(X\geq x)\leq e^{-x}.
	\end{equation}
\end{lm}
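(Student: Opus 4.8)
The plan is to run the standard exponential-moment (Chernoff) argument: apply Markov's inequality to $e^{tX}$ for a well-chosen $t>0$, and use the independence of the $X_i$ — the standing hypothesis for this bound — to factor the moment generating function. Write $\mu:=\mathbb{E}X=\sum_{i=1}^{n}p_i$; we may assume $\mu>0$, since otherwise $X\equiv 0$ and the bound is immediate. For every $t>0$, using $\mathbb{E}\bigl[e^{tX_i}\bigr]=1+p_i(e^{t}-1)\le e^{p_i(e^{t}-1)}$ (the elementary inequality $1+a\le e^{a}$) together with independence,
\begin{equation*}
\mathbb{P}(X\ge x)=\mathbb{P}\bigl(e^{tX}\ge e^{tx}\bigr)\le e^{-tx}\,\mathbb{E}\bigl[e^{tX}\bigr]=e^{-tx}\prod_{i=1}^{n}\bigl(1+p_i(e^{t}-1)\bigr)\le e^{-tx+\mu(e^{t}-1)}.
\end{equation*}
I would record this as a bound valid for all $t>0$ and then optimise the free parameter.

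The natural choice is $t=\ln(x/\mu)$, which is legitimate and positive because $x\ge 7\mu>\mu$. With it, $e^{t}=x/\mu$, so $\mu(e^{t}-1)=x-\mu$ and $tx=x\ln(x/\mu)$, and the displayed bound collapses to
\begin{equation*}
\mathbb{P}(X\ge x)\le \exp\!\bigl(x-\mu-x\ln(x/\mu)\bigr).
\end{equation*}
Hence it suffices to prove $x-\mu-x\ln(x/\mu)\le -x$, i.e.\ $\ln(x/\mu)\ge 2-\mu/x$.

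The final step is to verify the one-variable inequality $\ln y\ge 2-1/y$ for all $y\ge 7$, with $y:=x/\mu$. Setting $\phi(y):=\ln y-2+1/y$, I have $\phi'(y)=(y-1)/y^{2}>0$ for $y>1$, so $\phi$ is increasing on $[7,\infty)$, and $\phi(7)=\ln 7-2+\tfrac17>0$ since $\ln 7>1.94>2-\tfrac17$. Therefore $\phi(y)\ge\phi(7)>0$ for all $y\ge 7$, which is exactly the inequality needed; this completes the proof. (The constant $7$ is not optimal — the argument works for any constant larger than the root of $\phi$, which is near $6.3$ — but $7$ yields the clean bound $e^{-x}$.)

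Since the statement has no combinatorial content, I do not expect any genuine obstacle; the only mild points of care are remembering that independence of the $X_i$ is what licenses the product formula for the moment generating function, and the harmless numerical check $\ln 7+\tfrac17>2$ that certifies the admissibility of the constant $7$.
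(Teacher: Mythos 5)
Your proof is correct: the paper does not prove this lemma at all but simply quotes it from the Janson--\L uczak--Ruci\'nski reference \cite{J00}, and your exponential-moment argument with the choice $t=\ln(x/\mu)$, reducing the claim to $\ln y+1/y\geq 2$ for $y\geq 7$ (true since $\ln 7>2-\tfrac17$), is exactly the standard derivation behind that cited bound. You were also right to read independence of the $X_i$ into the hypotheses; it is implicit in the source and indispensable for factoring $\mathbb{E}\bigl[e^{tX}\bigr]$.
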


\noindent \textbf{Proof of Lemma \ref{nonclose}.}
Let  $\eta,\theta,\rho$ be reals such that $0<\theta\ll\eta\ll \rho\ll\varepsilon\ll1$. We choose a positive integer $r$ such that $n-3s-2\eta n\leq 2r\leq n-3s-\eta n$ and $n+r\equiv 0 \pmod 3$. Let $Q:=\{n+1,\ldots,n+r\}$ and $n_1:=n+r$. Recall that $H_r^3$ is a $3$-graph with vertex set $[n_1]$ and edge set $E(H_r^3)=E(H)\cup \{e\in {[n_1]\choose 3}\ :\ e\cap Q\neq \emptyset\}$. For proving $\nu(H)> s$, it is sufficient for us to show that $\nu(H_r^3)> s+r$. Indeed, let $M$ be a matching of size $s+r+1$ in $H^3_r$, then there are at most $r$ edges in $M$ intersecting $Q$.

We are going to find $n^{0.2}$ fractional perfect matchings $f_1,\ldots,f_{n^{0.2}}$ in $H_r^3$ such that
$$\sum_{\{x,y\}\subseteq e}\sum_{i=1}^{n^{0.2}}f_i(e)<2$$
for every pair $\{x,y\}\in\binom{V(H^3_r)}{2}$.

We use Lemma \ref{frac} to find fractional perfect matchings in $H_r^3$ as follows. Let $G_1:=H_r^3$.
  By the definition of $H_r^3$, one can see that $H^3_{n_1,r}([n],[n_1]\setminus [n])$ is a subgraph of $G_1$.  By Lemma \ref{frac}, $G_1$ has a fractional perfect matching $f_1$ satisfying $\sum_{\{x,y\}\subseteq e}f_1(e)\leq 1$ for every pair $\{x,y\}\in\binom{V(H_r^3)}{2}$.
	Suppose that we have found $t$ fractional perfect matchings $f_1,\ldots,f_t$ in $H^3_r$ such that $\sum_{\{x,y\}\subseteq e}\sum_{i=1}^{t}f_i(e)<2$ for every pair $\{x,y\}\in\binom{V(H^3_r)}{2}$. If $t\geq n^{0.2}$, then $f_1,\ldots,f_{n^{0.2}}$ are desired fractional perfect matchings. So we may assume that $1\leq t<n^{0.2}$.   Let $$S_t:=\left\{\{x,y\}\in \binom{[n_1]}{2}:\sum_{\{x,y\}\subseteq e}\sum_{i=1}^t f_i(e)\geq 1\right\}.$$  Let  $E_t:=\{e\in E(H_r^3):\binom{e}{2}\cap S_t\neq \emptyset\}$ and let $G_{t+1}:=H^3_r-E_{t}$. Note that $\Delta_2(G_1)\leq n_1-2$. So we have
	\begin{align*}
		|E_t|&\leq |S_t| (n_1-2)\\
        &<  n_1 \sum_{\{x,y\}\in S_t}\left(\sum_{\{x,y\}\subseteq e}\sum_{i=1}^tf_i(e)\right)\\
	    &\leq n_1\sum_{\{x,y\}\in\binom{[n_1]}{2}}\left(\sum_{\{x,y\}\subseteq e}\sum_{i=1}^t f_i(e)\right)\\
        &=n_1 \sum_{i=1}^t\left(\sum_{\{x,y\}\in\binom{[n_1]}{2}}\sum_{\{x,y\}\subseteq e} f_i(e)\right)\\
        &=n_1 \sum_{i=1}^t\left(3\sum_{e\in E(H^3_r)} f_i(e)\right)\\
		&= tn_1^2
		\leq n_1^{2.2}.
	\end{align*}
 We claim that	 for every $x\in V(H^3_r)$, there are at most $2t$ distinct vertices $y_1,\ldots,y_{2t}\in V(H^3_r)$ such that $\{x,y_i\}\in S_t$ for $1\leq i\leq 2t$. Otherwise, suppose that there are $2t+1$ distinct vertices, saying $y_1,\ldots,y_{2t+1}\in V(H^3_r)$ such that $\{x,y_j\}\in S_t$ for $1\leq j\leq 2t+1$.  Then $$2t=2\sum_{i=1}^{t}\sum_{x\in e}f_i(e)\geq \sum_{i=1}^t\sum_{j=1}^{2t+1}\sum_{\{x,y_j\}\subseteq e}f_i(e)\geq \sum_{j=1}^{2t+1}\left(\sum_{\{x,y_j\}\subseteq  e}\sum_{i=1}^tf_i(e)\right)\geq 2t+1,$$
a contradiction. Thus $d_{G_{t+1}}(x)\geq d_{H^3_r}(x)-2tn_1\geq d_{H^3_r}(x)-2n_1^{1.2}$ for every $x\in [n_1]$. Recall that $H^3_{n_1,r}([n],[n_1]\setminus [n])$ is a subgraph of $H^3_r$, one can see that
	$$|N_{H^3_{n_1,r}([n],[n_1]\setminus [n])}(x)\setminus N_{G_{t+1}}(x)|\leq 2n_1^{1.2}\leq \theta n_1^2$$ for every $x\in [n_1]$.
 So every vertex of $G_{t+1}$ is $\theta$-good with respect to $H^3_{n_1,r}([n],[n_1]\setminus [n])$.
Recall that $G_1-Q$  is not $\varepsilon$-close to  $H^3_{n,s}$ or $D^3_{n,s}$ and $E(G_{t+1}-Q)\subseteq E(G_1-Q)$. So we have  $G_{t+1}-Q$ is not $\varepsilon$-close to  $H^3_{n,s}$ or $D^3_{n,s}$. One can see that
\[
e(G_{t+1}-Q)\geq e(H)-|E_t|\geq \max\left\{\binom{n}{3}-\binom{n-s}{3},\binom{3s+2}{3}\right\}-2\rho n^3
 \]for sufficiently large $n$. By Lemma \ref{frac}, $G_{t+1}$ has a  factional perfect matching $f_{t+1}$.
Note that $f_{t+1}$ is a function defined on $E(H^3_r)\setminus E_t$. We extend $f_{t+1}$ to be a fractional perfect matching in $H^3_r$ by defining $f_{t+1}(e)=0$ for $e\in E_t$. Recall that $\sum_{\{x,y\}\subseteq e}\sum_{i=1}^{t}f_i(e)<1$ for every pair $\{x,y\}\notin S_t$ and $\sum_{\{x,y\}\subseteq e}\sum_{i=1}^{t}f_i(e)<2$ for every pair $\{x,y\}\in S_t$. Since $f_{t+1}(e)=0$ for $e\in E_{t}$, we have $\sum_{\{x,y\}\subseteq e}f_{t+1}(e)\leq1$ for every pair $\{x,y\}\notin S_t$ and $\sum_{\{x,y\}\subseteq e}f_{t+1}(e)=0$ for every pair $\{x,y\}\in S_t$. Thus
$\sum_{\{x,y\}\subseteq e}\sum_{i=1}^{t+1}f_i(e)<2$ for every pair $\{x,y\}\in {V(H_r^3)\choose 2}$.
Continuing the process, we may find  $n^{0.2}$ fractional perfect matchings $f_1,f_2,\ldots,f_{n^{0.2}}$ in $H^3_r$ such that $\sum_{\{x,y\}\subseteq e}\sum_{i=1}^{n^{0.2}}f_i(e)<2$ for every pair $\{x,y\}\in {V(H_r^3)\choose 2}$.

	
	Let $f_1,f_2,\ldots,f_{n^{0.2}}$ be $n^{0.2}$ fractional perfect matchings in $H^3_r$ such that for every $\{x,y\}\in\binom{V(H^3_r)}{2}$
	$$\sum_{\{x,y\}\subseteq e}\sum_{i=1}^{n^{0.2}}f_i(e)<2.$$
	Let $f:=\frac{1}{2}(f_1+\ldots+f_{n^{0.2}})$. One can see that $0\leq f(e)\leq 1$ for every $e\in E(H^3_r)$. We select a generalized binomial subgraph $H'$ of $H^3_r$ by independently choosing each edge $e\in E(H^3_r)$ with probability $f(e)$.
	
  Next we will show $d_{H'}(x)\thicksim n^{0.2}/2$ for any vertex $x$ and $\Delta_2(H')\leq n^{0.1}$.	Recall that $f_i$ is a fractional perfect matching, thus $\sum_{x\in e}f_i(e)=1$ for $1\leq i\leq n^{0.2}$. 
One can see that 	\begin{equation*}
		\mathbb{E}d_{H'}(x)=\sum_{x\in e} f(e)=\frac{1}{2}\sum_{i=1}^{n^{0.2}}\sum_{x\in e}f_i(e)= \frac{1}{2}n^{0.2}.
	\end{equation*}
	Hence by Chernoff's inequality (\ref{chers}),
	\begin{equation*}
		\mathbb{P}(|d_{H'}(x)-n^{0.2}/2|\geq \alpha n^{0.2}/2)\leq2e^{-\frac{\alpha^2}{6}n^{0.2}}.
	\end{equation*}
	Set $\alpha=n^{-0.05}$. We have  $|d_{H'}(x)-n^{0.2}/2|\leq n^{0.15}/2$ with probability $1-O(e^{-n^{0.1}})$. Thus for all $x\in V(H')$, we have $d_{H'}(x)= (\frac{1}{2}+o(1))n^{0.2}$ with probability $1-o(1)$.
	
	Moreover, for all pairs $x,y\in V(H')$, 
	\begin{equation*}
		\mathbb{E}d_{H'}(\{x,y\})=\sum_{\{x,y\}\subseteq e}f(e)=\frac{1}{2}\sum_{i=1}^{n^{0.2}}\sum_{\{x,y\}\subseteq e}f_i(e)\leq1.
	\end{equation*}
	 Hence,  by Chernoff's inequality (\ref{cherl}) for large deviations, when $n$ is sufficiently large,
	\begin{equation*}
		\mathbb{P}(d_{H'}(\{x,y\})\geq n^{0.1})\leq e^{-\Omega(n^{0.1})}.
	\end{equation*}
	So for all pairs of vertices $x,y\in V(H')$, $d_{H'}(\{x,y\})\leq n^{0.1}$ with  probability $1-o(1)$.
	
	Thus by Theorem~\ref{deg}, $H_r^3$ has a matching covering all but at most $\sigma n$ vertices, where $\sigma\ll\eta$ is a positive constant. Hence we have $\nu(H_r^3)\geq (n+r-\sigma n)/3>s+r$. This completes the proof.
\qed

%

\section{Proof of Theorem \ref{main}}
Note that $\binom{n}{3}-\binom{n-s}{3}=\binom{n}{3}-\binom{n-s}{3}-\binom{n-s-3}{2}+1+o(n^3)$. By inequality (\ref{diff}), one can see that $\binom{n}{3}-\binom{n-s}{3}\leq \binom{3s+2}{3}$ for sufficiently large $n$ and $s> \frac{13}{45}n$. Thus by Theorems \ref{erdos3} and \ref{bollobas}, we may assume that $\frac{n}{54}\leq s\leq \frac{13}{45}n$.

Lemmas \ref{closecase} and \ref{nonclose} imply that $e(H)\leq\max\{\binom{n}{3}-\binom{n-s}{3}-\binom{n-s-3}{2}+1,\binom{3s+2}{3}\}$ for sufficiently large $n$ and $\frac{n}{54}\leq s\leq \frac{13}{45}n$.          \qed

\bigskip

{\sc Acknowledgement}. The authors would like to thank Jie Han for sharing their work.

\end{document}